\numberwithin{equation}{section}
\DeclareMathOperator{\RE}{Re}
\DeclareMathOperator{\IM}{Im}
\theoremstyle{plain}
\newtheorem{theorem}{Theorem}[section]
\newtheorem{corollary}[theorem]{Corollary}
\newtheorem{lemma}{Lemma}[section]
\theoremstyle{definition}
\newtheorem{definition}[theorem]{Definition}
\theoremstyle{remark}
\newtheorem{remark}{Remark}[section]
\begin{document}
\title{Coefficient Functional and Bohr-Rogosinski Phenomenon for Analytic functions involving Semigroup Generators}
\author{Surya Giri and S. Sivaprasad Kumar}


\date{}


	

\maketitle	
	
\begin{abstract}
	 This paper examines the coefficient problems for the class of semigroup generators, a topic in complex dynamics that has recently been studied in context of geometric function theory.
	 Further, sharp bounds of coefficient functional such as second order Hankel determinant, third order Toeplitz and Hermitian-Toeplitz determinants are derived.
	 Additionally, the sharp growth estimates and the bounds of difference of successive coefficients are determined, which are used to prove the Bohr and the Bohr- Rogosinski phenomenon for the class of semigroup generators.
\end{abstract}
\vspace{0.5cm}
	\noindent \textit{Keywords:} Holomorphic generators; Hankel determinant; Toeplitz determinant; Zalcman functional; Successive coefficient difference;  Bohr and Bohr-Rogosinski radius.
	\\
	\noindent \textit{AMS Subject Classification:} 30C45, 30C50, 30C55, 47H20, 37L05.
\maketitle
	
\section{Introduction}
   Let $\mathcal{H}$ be the class of holomorphic functions in the unit disk $\mathbb{D}$ and $A\subset \mathcal{H}$ containing functions of the form
\begin{equation}\label{zero}
    f(z)=z +\sum_{n=2}^\infty a_n z^n.
\end{equation}
   By $\mathcal{B}$, we denote the class of holomorphic self mappings from $\mathbb{D}$ to $\mathbb{D}$.  A family $\{u_{t}(z)\}_{t\geq 0} \subset \mathcal{B}$ is called a one parameter continuous semigroup if (i) $\lim_{t\rightarrow 0} u_{t}(z)= z$,  (ii)$u_{t+s}(z) = u_{t}(z) \circ u_{s}(z)$,  and (iii) $\lim_{t\rightarrow s} u_{t}(z) = u_{s}(z)$ for each $z\in \mathbb{D}$ hold.

  Berkson and Porta \cite{BP} showed that each one parameter semigroup is locally differentiable in parameter $t\geq 0$ and moreover, if
      $$ \lim_{t\rightarrow 0} \frac{z - u_{t}(z)}{t} =f(z),$$
      which is a holomorphic function, then $u_{t}(z)$ is the solution of the
      the Cauchy problem
    $$ \frac{\partial u_t(z)}{\partial t}+ f(u_{t}(z)) =0, \quad u_{0}(z)=z.$$
    The function $f$ is called the infinitesimal (holomorphic) generator of semigroup $\{u_{t}(z)\} \subset \mathcal{B}$. The class of all holomorphic generators is denoted by $\mathcal{G}.$
     Also, note that each element of $\{u_{t}(z)\}$ generated by $f\in \mathcal{H}(\mathbb{D},\mathbb{C})$ is univalent function while $f$ is not necessarily univalent \cite{Ellin2}. Different analytic criteria are available to show that a function is semigroup generator \cite{BP,Shoi,FB}. Berkson and Porta \cite{BP} proved:
\begin{theorem}
    The following assertions are equivalent:
\begin{enumerate}
  \item[$(a)$] $f\in \mathcal{G}$;
  \item[$(b)$] $f(z)=(z - \sigma)(1- z \bar{\sigma})p(z)$ with some $\sigma \in \overline{\mathbb{D}}$ and $p\in \mathcal{H}$, $\RE(p(z))\geq 0$.
\end{enumerate}
\end{theorem}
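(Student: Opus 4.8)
The plan is to prove the two implications separately, in both cases using the Schwarz--Pick lemma and its boundary refinement (Julia's lemma) as the main engine, together with the two elementary identities valid for $|\tau|=1$, namely $|z-\tau|^2=|1-\bar\tau z|^2$ and $(z-\tau)(1-\bar\tau z)=-\bar\tau(z-\tau)^2$, and for $|\sigma|<1$ the identity $M_\sigma'(z)/M_\sigma(z)=(1-|\sigma|^2)/[(z-\sigma)(1-\bar\sigma z)]$, where $M_\sigma(z)=(z-\sigma)/(1-\bar\sigma z)$.

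\emph{Proof of $(a)\Rightarrow(b)$.} Let $f\in\mathcal G$ with semigroup $\{u_t\}$. If $f\equiv 0$ take $p\equiv 0$ and any $\sigma$, so assume $f\not\equiv 0$. If some $u_{t_0}$ is an elliptic automorphism, then so is every $u_{nt_0}$ and they share an interior fixed point, which by continuity of $t\mapsto u_t$ is fixed by the whole semigroup; otherwise apply the Denjoy--Wolff theorem to a non-automorphism $u_{t_0}$. In all cases there is a point $\sigma\in\overline{\mathbb D}$ with $u_t(z)\to\sigma$, which is the common fixed point of the semigroup. \textbf{Case $\sigma\in\mathbb D$:} differentiating $u_t(\sigma)=\sigma$ at $t=0$ gives $f(\sigma)=0$, so $p(z):=f(z)/[(z-\sigma)(1-\bar\sigma z)]$ is holomorphic on $\mathbb D$; conjugating by $M_\sigma$ produces a semigroup $v_t=M_\sigma\circ u_t\circ M_\sigma^{-1}$ fixing $0$, with generator $g(w)=M_\sigma'(M_\sigma^{-1}(w))\,f(M_\sigma^{-1}(w))$, and since $|v_t(w)|\le|w|$ by the Schwarz lemma, $t\mapsto|v_t(w)|^2$ is maximized at $t=0$, whence its right derivative $-2\RE[\bar w\,g(w)]$ is $\le 0$; substituting $w=M_\sigma(z)$ and using $g(w)/w\to (1-|\sigma|^2)p(z)$ gives $\RE p\ge 0$. \textbf{Case $|\sigma|=1$ (write $\tau=\sigma$):} now $p(z):=f(z)/[(z-\tau)(1-\bar\tau z)]$ is automatically holomorphic on $\mathbb D$; the semigroup has Denjoy--Wolff point $\tau$ with angular derivatives $u_t'(\tau)=e^{-\beta t}\le 1$ (the exponent being forced by the chain rule and continuity), and the Julia--Wolff--Carath\'eodory inequality $\Phi(u_t(z))\le e^{-\beta t}\Phi(z)$ with $\Phi(w)=|\tau-w|^2/(1-|w|^2)$ shows that the right derivative of $t\mapsto\Phi(u_t(z))$ at $t=0$ is $\le 0$; computing this derivative with $\dot u_0=-f(z)$ and simplifying by the boundary identities turns it exactly into $\RE[f(z)\,\overline{(z-\tau)(1-\bar\tau z)}]\ge 0$, i.e. $\RE p\ge 0$.

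\emph{Proof of $(b)\Rightarrow(a)$.} Conversely, let $f(z)=(z-\sigma)(1-\bar\sigma z)p(z)$ with $\RE p\ge 0$. Consider the autonomous Cauchy problem $\dot u=-f(u)$, $u(0)=z$; by Picard's theorem it has a unique maximal solution depending holomorphically on $z$, and it suffices to show this solution stays in $\mathbb D$ and is defined for all $t\ge 0$, since then $u_{t+s}=u_t\circ u_s$ and $u_0=\mathrm{id}$ follow from uniqueness and holomorphy is preserved, so that $\{u_t\}$ is the desired semigroup with generator $f$. If $\sigma\in\mathbb D$, conjugate by $M_\sigma$ to reduce to $\sigma=0$, where $f(w)=w\,p(w)$ with $\RE p\ge0$; then $\frac{d}{dt}|u(t)|^2=-2|u(t)|^2\RE p(u(t))\le 0$, so $|u(t)|\le|z|$, and since $f$ is bounded on $\{|w|\le|z|\}$ the solution is global and disk-valued. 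If $|\sigma|=1$, take $\Phi(w)=|\sigma-w|^2/(1-|w|^2)$; the computation of the first part run in reverse shows $\frac{d}{dt}\Phi(u(t))\le 0\iff\RE p(u(t))\ge 0$, which holds, so $u(t)$ remains in the horocycle $\{w:\Phi(w)\le\Phi(z)\}\subset\mathbb D\cup\{\sigma\}$; the Herglotz representation of $p$ gives $|p(w)|(1-|w|)\le C$, hence $|f(w)|=|\sigma-w|^2|p(w)|$ is bounded on that horocycle, so again the solution is global and stays in $\mathbb D$.

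\emph{Main obstacle.} The delicate direction is $(a)\Rightarrow(b)$: one must correctly isolate the common fixed point of the semigroup (splitting off the elliptic-automorphism case into the interior case), establish the exponential decay $u_t'(\tau)=e^{-\beta t}$ of the angular derivatives along the semigroup, and then differentiate Julia's inequality at $t=0$ --- the algebraic step that converts $\frac{d}{dt}\Phi(u_t(z))\le 0$ into $\RE p\ge0$ rests entirely on the two boundary identities above. In the converse direction the only subtlety is the a priori bound needed for global existence when $\sigma\in\partial\mathbb D$, which is exactly where the Herglotz bound on $p$ is used; the rest is a routine Lyapunov argument.
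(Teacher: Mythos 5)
The paper itself offers no proof of this statement---it is quoted from Berkson and Porta \cite{BP}---so your argument has to stand on its own. Your direction $(a)\Rightarrow(b)$ is essentially the standard argument and is sound: in the interior case, conjugating to a semigroup fixing $0$ and differentiating $|v_t(w)|^2$ at $t=0$ gives $\RE\,[\bar w g(w)]\ge 0$, hence $\RE p\ge 0$; in the boundary case, differentiating Julia's inequality does produce $\RE\big[f(z)\overline{(z-\tau)(1-\bar\tau z)}\big]\ge 0$, which is $\RE p\ge 0$. Two remarks: you do not need the exponential law $u_t'(\tau)=e^{-\beta t}$ (whose proof via the chain rule for angular derivatives is itself nontrivial); Julia--Wolff--Carath\'eodory at the common Denjoy--Wolff point already gives $u_t'(\tau)\le 1$, which is all the monotonicity of $\Phi(u_t(z))$ requires. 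Also, the existence of a \emph{common} fixed point (in particular in the elliptic-automorphism branch) should be argued via commutativity of the $u_t$'s rather than ``continuity''; this is routine but not automatic.

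The genuine gap is in $(b)\Rightarrow(a)$ when $|\sigma|=1$, at the step ``the Herglotz bound makes $|f|$ bounded on the horocycle, so the solution is global and stays in $\mathbb{D}$.'' Boundedness of the vector field on the horodisk $H=\{\Phi\le\Phi(z)\}$ only makes the trajectory Lipschitz in $t$, hence convergent as $t$ approaches the maximal existence time $T$; since $\overline{H}\cap\partial\mathbb{D}=\{\sigma\}$, what must be excluded is precisely that $u(t)\to\sigma$ with $T<\infty$, and boundedness of $|f|$ does not exclude a trajectory reaching a boundary point of $H$ in finite time. Quantitatively, your estimates give only $\frac{d}{dt}|\sigma-u|^2=-2|\sigma-u|^2\,\RE\big[(1-\bar\sigma u)p(u)\big]$ together with $|p(u)|\le C/(1-|u|)$ and $|\sigma-u|^2\le \Phi(z)(1-|u|^2)$ on $H$, which combine to nothing better than $\frac{d}{dt}|\sigma-u|\ge -C'$; that differential inequality is perfectly compatible with $|\sigma-u(t)|$ vanishing at a finite time, i.e.\ with the flow leaving $\mathbb{D}$. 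Closing this requires using $\RE p\ge 0$ along the trajectory in an essential way, not just through the modulus bound: for instance, transfer the problem by the Cayley map sending $\sigma$ to $\infty$, where the equation becomes $\dot W=2\tilde p(W)$ with $\RE\tilde p\ge 0$ (so $\RE W(t)$ is nondecreasing) and one must still control the growth of $\tilde p$ well enough to forbid finite-time blow-up; or approximate $\sigma$ by the interior points $r\sigma$, apply your interior case to $f_r(z)=(z-r\sigma)(1-r\bar\sigma z)p(z)$, and pass to the limit using the uniform invariance $|M_{r\sigma}(u^{(r)}_t(z))|\le|M_{r\sigma}(z)|$ (whose $r\to1$ limit is exactly the Julia/horodisk invariance) plus a normal-families and ODE-stability argument. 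As written, the converse implication is not established; the interior case $\sigma\in\mathbb{D}$ of your converse and the whole of $(a)\Rightarrow(b)$ are fine modulo the routine points noted above.
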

    The point $\sigma \in \mathbb{\overline{D}}:=\{ z \in \mathbb{C}: \lvert z \rvert \leq 1\}$ is unique and called the Denjoy--Wolff point of the semigroup generated by $f(z)$. By Denjoy-Wolff theorem \cite{Ellin2,Shoi,DWT} for continuous semigroup, $\lim_{t\rightarrow \infty} u(t,z)= \sigma$ if the semigroup generated by $f$ is neither an elliptic automorphism of $\mathbb{D}$ nor an identity map for at least one $t\in [0,\infty)$.
    We denote the class of infinitesimal generators with Denjoy-Wolff point $\sigma$ by $\mathcal{G}[\sigma].$
    For $\sigma=0$, we obtain the following subclass
   $$ \mathcal{G}[0]= \{f\in  \mathcal{G} : f(z)= z p(z), \;\; \RE p(z) \geq 0 \}.$$
    Bracci et al. \cite{FB1} considered the class $\mathcal{G}_0 = \mathcal{G}[0]\cap \mathcal{A}$. In the study of non-autonomous problem such as Loewner theory, the class $\mathcal{G}_0$ plays a significant role \cite{FBL,Duren}. Various subclasses of $\mathcal{G}_0$ are considered with parameter, which is also called filtration. For instance, the class
\begin{equation}\label{one}
     \mathcal{A}_\beta = \left\{f\in \mathcal{A}:  \RE \bigg(\beta \frac{f(z)}{z}+(1-\beta) f'(z)\bigg) >0, \quad \beta\in [0,1] \right\}
\end{equation}
     is a subclass of $\mathcal{G}_0.$ In \cite{FB1}, the authors proved that $\mathcal{A}_{\beta_1} \subsetneq \mathcal{A}_{\beta_2} \subsetneq \mathcal{G}_0$ for $0 \leq \beta_1 < \beta_2 <1$ and whenever $f\in \mathcal{A}_\beta$,
     $$ \RE\frac{f(z)}{z} \geq \int_0^1 \frac{1 - t^{1-\beta}}{1 + t^{1-\beta}} dt.$$
      Clearly, when $\beta =0$, the class $\mathcal{A}_\beta$ reduces  to the class
     $ \mathcal{R} := \{ f \in \mathcal{A} : \RE f'(z) >0 \},$
     which is called the class of bounded turning functions. It can be easily seen that $\mathcal{R} \subset \mathcal{G}_0$ and each $f \in \mathcal{R}$ satisfies the Noshiro-Warshawski condition \cite{Duren}, thus every member of $\mathcal{R}$ generates a semigroup that is univalent. Elin et al. \cite{Elin} solved the radii problems for the class $\mathcal{A}_\beta$. They found the radii $r\in(0,1)$ for $f\in \mathcal{A}_\beta$ such that $f(r z)/r $ belong to the class of starlike functions, denoted by $\mathcal{S}^*$, and some other subclasses of starlike functions. This problem arises from the fact that neither $\mathcal{S}^* \subset \mathcal{A}_\beta$ nor $\mathcal{A}_\beta \subset \mathcal{S}^*$.
     Generalizing this work, Giri and Kumar \cite{Surya3} obtained $r$ such that $f(rz)/r$ belong to the class of Ma-Minda starlike functions. 


     Coefficient problems, growth estimates and others were still open for the class $\mathcal{A}_\beta$. In this paper, we focus on these problems. We find the bound of $n^{th}$ coefficient of $f\in \mathcal{A}_\beta$ and coefficient functional such as second Hankel determinant, third order Toeplitz and Hermitian Toeplitz determinant and Zalcman functional. Later, Bohr and Bohr-Rogosinski phenomenon with growth estimates are also discussed for this class.

    In 1914, Bohr \cite{Bohr} proved that,
     if $\omega(z) =\sum_{n=0}^\infty c_n z^n \in \mathcal{B}$, then $\sum_{n=0}^\infty \lvert c_n \rvert r^n \leq 1$ for all $z\in \mathbb{D}$ with $\lvert z \rvert = r \leq 1/3.$
     The constant $1/3$ is known as Bohr radius. This inequality was first derived by Bohr for $r \leq 1/6$ and sharpened independently to $r \leq 1/3$ by Wiener, Riesz, and Schur. In recent years, a lot of study have been carried out on the Bohr inequality for functions, which map $\mathbb{D}$ onto other domains, say $\Omega$ \cite{Ben,ganganina}. 
    Different generalizations of the Bohr inequality are taken into consideration \cite{Wu,Liu}. We say that
\begin{definition}
     The class $\mathcal{A}_\beta$ satisfies the Bohr phenomenon if there exists $r_b$ such that
     $$ \lvert z \rvert +\sum_{n=2}^\infty \lvert a_n \rvert \lvert z \rvert^n \leq  d( f(0), \partial f(\Omega))$$
     holds in $\lvert z \rvert = r \leq r_b,$ where $\partial f(\Omega)$ is the boundary of image domain of $\mathbb{D}$ under $f$ and $d$ denotes the Euclidean distance between $f(0)$ and $\partial f(\Omega)$.
\end{definition}
    For $m =1$, Muhanna \cite{Muhana} showed that the Bohr phenomenon holds for the class of univalent functions and the class of convex functions, when $\lvert z\rvert =r \leq 3- 2 \sqrt{2}$ and $\lvert z \rvert =r \leq 1/3$ respectively. We refer to the survey article \cite{surveryonbohr} for further details on this topic.
    There is also the concept of Rogosinski radius along with the Bohr radius, although a little is known about Rogosinski radius in comparison to Bohr radius \cite{Kamal2,Landau, Rogo}. It says that, if $\omega(z) = \sum_{n=0}^\infty c_n z^n \in \mathcal{B}$, then
    $$ \sum_{n=0}^{N-1} \lvert c_n \rvert \lvert z\rvert^n \leq 1  \quad (N \in \mathbb{N}) $$
     in the disk $\lvert z \rvert =r \leq 1/2.$ The radius $1/2$ is called the Rogosinski radius. Kayumov et al. \cite{Kayumov} considered the following expression
    $$ R_N^f(z) := \lvert f(z) \rvert + \sum_{n=N}^\infty \lvert a_n\rvert \lvert z \rvert^n $$
   and found the radius $r_N$ such that $R_N^f(z) \leq 1$ in $\lvert z \rvert = r \leq r_N$ for the Ces\'{a}ro operators on the space of bounded analytic functions. Here, we say that:
\begin{definition}
    The class $\mathcal{A}_\beta$ satisfies the Bohr-Rogosinski phenomenon if there exist $r_N$ such that
     $$ \lvert f(z^m) \rvert + \sum_{n=N}^\infty \lvert a_n \rvert \lvert z \rvert^n \leq d(f(0),\partial f(\Omega)) $$
     holds in $\lvert z \rvert = r \leq r_N$.
\end{definition}
      \noindent Section \ref{Bohr} is devoted to find the $r_b$ and $r_N$ for the class $\mathcal{A}_\beta.$

       For $f(z) = z + \sum_{n=2}^\infty a_n z^n \in \mathcal{A}$, the $m^{th}$ Hankel, Toeplitz and Hermitian Toeplitz determinant for $m\geq 1$ and $n\geq 0$ are  respectively given by
\begin{equation*}
    H_{m}(n)(f) = \begin{vmatrix}
	a_n & a_{n+1} & \cdots & a_{n+m-1} \\
	{a}_{n+1} & a_{n+2} & \cdots & a_{n+m}\\
	\vdots & \vdots & \vdots & \vdots\\
    {a}_{n+m-1} & {a}_{n+m} & \cdots & a_{n+2m-2}
    \end{vmatrix},
\end{equation*}
\begin{equation}\label{tplz}
      T_{m}(n)(f)= \begin{vmatrix}
	a_n & a_{n+1} & \cdots & a_{n+m-1} \\
	{a}_{n+1} & a_n & \cdots & a_{n+m-2}\\
	\vdots & \vdots & \vdots & \vdots\\
    {a}_{n+m-1} & {a}_{n+m-2} & \cdots & a_n
	\end{vmatrix},
\end{equation}
\begin{equation}\label{htplz}
     T_{m,n}(f)= \begin{vmatrix}
	a_n & a_{n+1} & \cdots & a_{n+m-1} \\
	\bar{{a}}_{n+1} & a_n & \cdots & a_{n+m-2}\\
	\vdots & \vdots & \vdots & \vdots\\
    \bar{{a}}_{n+m-1} & \bar{{a}}_{n+m-2} & \cdots & a_n
	\end{vmatrix},
\end{equation}
     where $\bar{a}_n = \overline{a_{n}}$.
      Toeplitz matrices have constant entries along their diagonals, while Hankel matrices have constant entries along their reverse diagonals.
      In particular, $$H_{2}(n)(f)= a_n a_{n+2} -a_{n+1}^2, \;\; T_{3}(1)(f) = 1 - 2 a_2^2 + 2 a_2^2 a_3 - a_3^2 $$ and $T_{3,1}(f) = 1 - 2 \lvert a_2 \rvert^2  +2 \RE (a_2^2 \bar{a}_3) - \lvert a_3 \rvert^2.$  Finding the sharp  bound of $\lvert H_{2}(2)(f)\rvert$ for the class $\mathcal{S}$ and its subclasses has always been the focus of many researchers.
       Although, investigations concerning Toeplitz and Hermitian Toeplitz are recently introduced in \cite{vasu,Cudna},  a summary of some of the more significant results is given in \cite{Thomas}. For more work in this direction (see \cite{Vkumar,VkumarT,Lecko2,tuneski}).

     In 1999, Ma \cite{Ma} proposed a conjecture for $f(z)= z + \sum_{n=2}^\infty a_n z^n\in \mathcal{S}$ that
     $$ \lvert J_{m,n} \rvert := \lvert a_n a_m - a_{n+m-1} \rvert \leq (m-1)(n-1).$$
     They proved this conjecture for the class of starlike functions and univalent functions with real coefficients. It is also called generalized Zalcman conjecture as it generalizes the Zalcman conjecture $\lvert a_n^2 - a_{2n-1} \rvert \leq (2n-1)^2 $ for $f\in \mathcal{S}$. Recently, bound of $\lvert J_{2,3}\rvert$  are obtained for various subclasses of $\mathcal{A}$ \cite{Allu,ChoZ}. In section \ref{H&Z} and \ref{sec3}, we obtain the sharp bound of $\lvert H_{2}(2)(f)\rvert$, $\lvert T_{3}(1)(f)\rvert$ and $\lvert J_{2,3}(f)\rvert$ for $f\in \mathcal{A}_\beta.$
\section{Hankel Determinant and Zalcman Functional }\label{H&Z}
\begin{theorem}\label{cbnd}
   If $f\in \mathcal{A}_\beta$ is of the form (\ref{zero}), then
\begin{equation}\label{two}
 \lvert a_n \rvert \leq \frac{2}{n - \beta (n-1)}.
\end{equation}
   Further, this inequality is sharp for each $n$.
\end{theorem}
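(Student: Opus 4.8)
The plan is to extract coefficient information from the defining condition of $\mathcal{A}_\beta$ by writing $\beta\,\frac{f(z)}{z}+(1-\beta)f'(z) = p(z)$ for some $p\in\mathcal{H}$ with $\RE p(z)>0$ and $p(0)=1$, i.e. $p(z)=1+\sum_{n=1}^\infty p_n z^n$ with the classical Carathéodory bound $\lvert p_n\rvert\le 2$ for all $n\ge 1$. First I would substitute the series $f(z)=z+\sum_{n=2}^\infty a_n z^n$ into the left-hand side and compare coefficients of $z^{n-1}$. The term $\beta\,f(z)/z$ contributes $\beta\,a_n$ to the coefficient of $z^{n-1}$, while $(1-\beta)f'(z)$ contributes $(1-\beta)\,n\,a_n$; hence for each $n\ge 2$ one gets the clean identity $\bigl(\beta + (1-\beta)n\bigr)a_n = p_{n-1}$, that is, $\bigl(n-\beta(n-1)\bigr)a_n = p_{n-1}$. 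Taking absolute values and using $\lvert p_{n-1}\rvert\le 2$ yields immediately $\lvert a_n\rvert \le \dfrac{2}{n-\beta(n-1)}$, which is \eqref{two}. Note $n-\beta(n-1)=\beta+(1-\beta)n>0$ for $\beta\in[0,1]$ and $n\ge 2$, so there is no division issue.

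The only remaining point is sharpness for each fixed $n$. Here I would exhibit an explicit extremal function: choose $p$ so that $p_{n-1}=2$, for instance by taking $\beta\,\frac{f(z)}{z}+(1-\beta)f'(z) = \dfrac{1+z^{n-1}}{1-z^{n-1}}$, which has positive real part and expansion $1+2z^{n-1}+2z^{2(n-1)}+\cdots$. Solving this linear first-order relation for $f$ (equivalently, reading off the coefficients term by term: only the coefficients $a_{k}$ with $k\equiv 1 \pmod{n-1}$ are nonzero, and in particular $a_n = \dfrac{2}{n-\beta(n-1)}$) produces an $f\in\mathcal{A}_\beta$ for which equality holds in \eqref{two}. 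One should check that this $f$ indeed lies in $\mathcal{A}_\beta$, which is immediate since by construction $\beta\,f(z)/z+(1-\beta)f'(z)$ equals a function of positive real part; and that $f\in\mathcal{A}$, i.e. $f(0)=0$, $f'(0)=1$, which also follows from the construction.

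I do not anticipate a genuine obstacle: the computation is a one-line coefficient comparison because the operator $f\mapsto \beta f(z)/z+(1-\beta)f'(z)$ is diagonal in the monomial basis, so the bound reduces directly to the Carathéodory inequality $\lvert p_{n-1}\rvert\le 2$. The mildly delicate part is simply to state the extremal function cleanly and to confirm membership in $\mathcal{A}_\beta$; if one prefers an even more transparent extremal, one can instead take $p(z) = \dfrac{1+z}{1-z}$ composed appropriately, but the rotation $p(z)=\frac{1+z^{n-1}}{1-z^{n-1}}$ is the natural choice since it makes the single relevant coefficient $p_{n-1}$ equal to $2$ while keeping $f$ explicit.
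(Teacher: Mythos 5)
Your proof is correct and follows essentially the same route as the paper: the diagonal coefficient identity $\bigl(n-\beta(n-1)\bigr)a_n=p_{n-1}$ plus the Carath\'eodory bound $\lvert p_{n-1}\rvert\le 2$. The only (harmless) difference is in the extremal: you take $p(z)=(1+z^{n-1})/(1-z^{n-1})$ separately for each $n$, while the paper uses the single function $\tilde f$ generated by $p(z)=(1+z)/(1-z)$, which attains equality for all $n$ simultaneously.
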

\begin{proof}
   Let $f\in \mathcal{A}_\beta$ is given by (\ref{zero}), then we have
   $$ \beta \frac{f(z)}{z}+(1-\beta)z f'(z) = p(z) \quad \;\; (z\in \mathbb{D}),$$
   where $p(z) = 1+ \sum_{n=1}^\infty p_n z^n$ such that $\RE{p(z)}>0$ is a member of the Carath\'{e}odory class $\mathcal{P}$. Upon comparing the coefficients  of same powers on either side with the series expansion of $f$ and $p$ yields
\begin{equation}\label{three}
    ( n - (n-1)\beta )a_n = p_{n-1}
\end{equation}
   for $n=2,3,4,\cdots$, which gives the needed bound of $\lvert a_n \rvert$ using the Carath\'{e}odory coefficient bounds $\lvert p_n \rvert \leq 2$ (see \cite{Duren}).
    The function $\tilde{f} : \mathbb{D}\rightarrow \mathbb{C}$ defined by
\begin{equation}\label{seven}
    \tilde{f}(z)=  z \bigg( -1 + 2 \bigg( {}_2 F_1 \bigg[1, \frac{1}{1-\beta}, \frac{2- \beta}{1-\beta},z \bigg]\bigg) \bigg)= z + \sum_{n=2}^\infty \frac{2}{n -(n-1)\beta} z^n
\end{equation}   satisfies the condition $\RE \left( \beta \tilde{f}(z)/z + (1- \beta ) \tilde{f}'(z) \right)>0,$
    hence $\tilde{f}$ is a member of $\mathcal{A}_\beta$, where ${}_2 F_1$ denotes the Gauss hypergeometric function.
   Equality in (\ref{two}) occurs for $\tilde{f}$, which proves the sharpness of the bound.
\end{proof}
\begin{corollary}\label{crl11}
   If $f\in \mathcal{A}_\beta$, then for any real $\mu\geq 0$
   $$ \lvert a_n a_{n+2} - \mu a_{n+1}^2 \rvert \leq \frac{4}{(n - ( n - 1)\beta ) ( n + 2 - ( n + 1) \beta)} + \frac{4 \mu}{( n + 1- n \beta)^2}.$$
   The bound is sharp.
\end{corollary}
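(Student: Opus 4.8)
The plan is to derive the inequality directly from the sharp first-coefficient bound of Theorem~\ref{cbnd}, so that no new extremal problem over the Carath\'{e}odory class has to be solved. First I would apply the triangle inequality to split the functional,
$$ \bigl|a_n a_{n+2}-\mu a_{n+1}^2\bigr|\;\le\;|a_n|\,|a_{n+2}|+\mu\,|a_{n+1}|^2, $$
and then insert the estimate $|a_k|\le 2/\bigl(k-(k-1)\beta\bigr)$ from Theorem~\ref{cbnd} for $k=n,\,n+1,\,n+2$. Summing the resulting two bounds (the estimate for $|a_{n+1}|^2$ being multiplied by $\mu\ge 0$) gives exactly the asserted right-hand side. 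This part is routine: the functional is controlled term by term, so no Schur-type relation linking $p_{n-1},p_n,p_{n+1}$ is needed.

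The substantive point is the sharpness claim. The natural candidate is the extremal function $\tilde f$ of \eqref{seven}, for which $a_k=2/\bigl(k-(k-1)\beta\bigr)$ for every $k$, so that each of the three individual estimates above becomes an equality; one then computes what $\bigl|a_n a_{n+2}-\mu a_{n+1}^2\bigr|$ actually equals for this function. The delicate issue is that equality in the triangle inequality additionally demands that the two pieces $a_n a_{n+2}$ and $-\mu a_{n+1}^2$ carry the same argument --- equivalently, through \eqref{three}, that $p_{n-1}p_{n+1}$ and $-p_n^2$ point in the same direction for the Carath\'{e}odory function $p$ attached to $f$. I expect arranging this alignment (and, should a single Blaschke datum $(1+xz)/(1-xz)$ not suffice, passing to an appropriate convex combination of such functions so the signs of $p_{n-1},p_n,p_{n+1}$ cooperate) to be the one step that needs genuine care; for $\mu=0$ sharpness is immediate from $\tilde f$, as is the routine verification $\RE\!\bigl(\beta\tilde f(z)/z+(1-\beta)\tilde f'(z)\bigr)>0$ that places $\tilde f$ in $\mathcal A_\beta$.
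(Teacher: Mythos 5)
Your derivation of the inequality itself coincides with the paper's proof: triangle inequality followed by the bound \eqref{two} applied to $\lvert a_n\rvert$, $\lvert a_{n+1}\rvert$, $\lvert a_{n+2}\rvert$. That part is complete and needs no further comment.

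The gap is the sharpness claim, which you flag as ``the one step that needs genuine care'' but never carry out --- and, in the form you envisage, it cannot be carried out. For $\mu>0$, equality in your chain forces all three moduli $\lvert a_n\rvert,\lvert a_{n+1}\rvert,\lvert a_{n+2}\rvert$ to attain their maxima simultaneously, i.e., by \eqref{three}, $\lvert p_{n-1}\rvert=\lvert p_n\rvert=\lvert p_{n+1}\rvert=2$ for the associated Carath\'{e}odory function, and in addition $p_{n-1}p_{n+1}$ must be anti-aligned with $p_n^2$. But the Herglotz representation $p_k=2\int_{\lvert\epsilon\rvert=1}\epsilon^k\,d\nu(\epsilon)$ shows that $\lvert p_{n-1}\rvert=\lvert p_n\rvert=2$ already forces $\nu$ to be a point mass, so $p(z)=(1+\epsilon z)/(1-\epsilon z)$ with $\lvert\epsilon\rvert=1$ and hence $p_{n-1}p_{n+1}=p_n^2=4\epsilon^{2n}$: the two products are always exactly aligned, never anti-aligned. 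Your fallback of passing to convex combinations of kernels destroys the modulus-$2$ requirement, so it cannot restore equality either. Consequently, for $\mu>0$ the stated sum is attained by no $f\in\mathcal{A}_\beta$, and since the functional is continuous on the compact class the supremum is strictly smaller (for $\beta=0$, $n=2$, $\mu=1$ the sharp value of $\lvert a_2a_4-a_3^2\rvert$ on $\mathcal{R}$ is $4/9$, well below the $17/18$ produced by the sum). Note that the paper's own candidate $\tilde f_1$ of \eqref{tildef2} meets exactly this obstruction: its coefficients give $a_na_{n+2}$ and $a_{n+1}^2$ the common phase $(-1)^n$, so $\lvert a_na_{n+2}-\mu a_{n+1}^2\rvert$ equals the \emph{difference} of the two terms, not their sum. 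So your instinct about where the difficulty lies is sound, but the proposal leaves sharpness unproved; it holds only for $\mu=0$ (via $\tilde f$ of \eqref{seven}) or in the weak sense that each factor $\lvert a_k\rvert$ is individually extremal for $\tilde f_1$. A genuinely sharp bound for $\mu>0$ would require an extremal analysis over $\mathcal{P}$ in the spirit of the paper's treatment of $J_{2,3}$, not the termwise estimate.
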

\begin{proof}
    Since  $\lvert a_n a_{n+2} - \mu a_{n+1}^2 \rvert \leq \lvert a_n \rvert \lvert a_{n+2} \rvert + \mu \lvert a_{n+1} \rvert^2 $. The bound simply follows from (\ref{two}).
     To see the sharpness, consider
\begin{equation}\label{tildef2}
   \tilde{f}_1(z) = z \left( -1 + 2 \left( {}_2 F_1 \left[1, \frac{1}{1-\beta}, \frac{2- \beta}{1-\beta}, i z \right]\right) \right) = z+ \sum_{n=2}^\infty \frac{2 i^{n-1}}{(n-(n-1)\beta)}  z^n.
\end{equation}
   It can be easily seen that $\tilde{f}_1(z)$ satisfy (\ref{one}), thus $\tilde{f}_1 \in \mathcal{A}_\beta$.
\end{proof}
    For $\mu =1$, Corollary \ref{crl11} gives the following sharp bound:
\begin{corollary}
    If $f\in \mathcal{A}_\beta$ is of the form (\ref{crl11}), then
    $$\lvert H_{2}(n)(f)\rvert \leq  \frac{4 \left((2 n^2 - 1)\beta^2  - (4 n^2 + 4 n - 2 )\beta + 2 n^2 + 4 n + 1 \right)}{(n - (n - 1) \beta) (n + 2 - (n + 1) \beta) (n + 1 - n \beta)^2}. $$
\end{corollary}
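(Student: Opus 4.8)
The plan is to read this off as the special case $\mu=1$ of Corollary~\ref{crl11} and then do a routine algebraic tidying; no new idea is needed beyond the coefficient bound already in hand. Since $H_2(n)(f)=a_na_{n+2}-a_{n+1}^2$, taking $\mu=1$ in Corollary~\ref{crl11} gives immediately
\begin{equation*}
 \lvert H_2(n)(f)\rvert \leq \frac{4}{(n-(n-1)\beta)(n+2-(n+1)\beta)}+\frac{4}{(n+1-n\beta)^2},
\end{equation*}
so the whole task reduces to writing the right-hand side over the common denominator $(n-(n-1)\beta)(n+2-(n+1)\beta)(n+1-n\beta)^2$ and recognising the numerator.

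Carrying this out, after clearing denominators the numerator becomes
\begin{equation*}
 4\Big[(n+1-n\beta)^2+(n-(n-1)\beta)(n+2-(n+1)\beta)\Big].
\end{equation*}
Expanding $(n+1-n\beta)^2=(n^2+2n+1)-(2n^2+2n)\beta+n^2\beta^2$ and $(n-(n-1)\beta)(n+2-(n+1)\beta)=(n^2+2n)-(2n^2+2n-2)\beta+(n^2-1)\beta^2$ and adding, the coefficient of $\beta^2$ is $2n^2-1$, the coefficient of $\beta$ is $-(4n^2+4n-2)$, and the constant term is $2n^2+4n+1$; multiplying by $4$ reproduces exactly the numerator in the statement. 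I would double-check the bookkeeping by specialising to $\beta=0$ (denominator $n(n+2)(n+1)^2$) and to $\beta=1$ (the three factors collapse to $1$, numerator to $8$), both of which agree with the two-term form above.

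Since the inequality itself is this short, the only genuinely delicate point is sharpness, which the computation above does not settle. The estimate is the crude one coming from $\lvert a_na_{n+2}-a_{n+1}^2\rvert\le\lvert a_n\rvert\lvert a_{n+2}\rvert+\lvert a_{n+1}\rvert^2$ and~(\ref{two}); to attain it one would need $\lvert a_n\rvert,\lvert a_{n+1}\rvert,\lvert a_{n+2}\rvert$ all maximal and, simultaneously, $a_na_{n+2}$ and $a_{n+1}^2$ of opposite argument. For the obvious candidate $\tilde f_1$ of~(\ref{tildef2}) one computes that $a_na_{n+2}$ and $a_{n+1}^2$ have the \emph{same} argument, so $\tilde f_1$ realises a difference of the two terms rather than their sum; via~(\ref{three}) and the rigidity of Carath\'eodory functions possessing a coefficient of modulus $2$ one would have to decide whether any $f\in\mathcal{A}_\beta$ does better. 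If not, the displayed constant should be regarded as an upper bound rather than as the exact value of $\max\lvert H_2(n)(f)\rvert$.
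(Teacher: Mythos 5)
Your argument is exactly the paper's: the paper obtains this corollary by setting $\mu=1$ in Corollary~\ref{crl11} and combining the two fractions over the common denominator $(n-(n-1)\beta)(n+2-(n+1)\beta)(n+1-n\beta)^2$, and your expansion of the numerator is correct. Your caution about sharpness is also well placed — the quoted statement claims only the inequality, and indeed for $\tilde f_1$ the products $a_na_{n+2}$ and $a_{n+1}^2$ share the argument $i^{2n}$, so the triangle-inequality bound is not attained by that function (for $\beta=0$, $n=2$ the known sharp bound for $\mathcal{R}$ is $4/9$, below $17/18$), so the displayed constant should indeed be read as an upper bound.
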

   For $n=2$ and 3, the following sharp bound of second order Hankel determinant follows:
\begin{corollary}
   If $f\in \mathcal{A}_\beta$ is of the form (\ref{crl11}), then
    $$\lvert H_{2}(2)(f)\rvert \leq \frac{4( 7 \beta^2 - 22 \beta + 17)}{(4 - 3 \beta) (3 - 2 \beta)^2 (2 - \beta)}, \quad \lvert H_{2}(3)(f)\rvert \leq \frac{4(17 \beta^2 - 46 \beta + 31 )}{(5 - 4 \beta) (4 - 3 \beta)^2 (3 - 2 \beta)}.$$
\end{corollary}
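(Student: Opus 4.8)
The final corollary is a direct specialization of the preceding one, so my approach would be to substitute $n=2$ and $n=3$ into the general estimate
\[
\lvert H_{2}(n)(f)\rvert \leq \frac{4\left((2n^{2}-1)\beta^{2}-(4n^{2}+4n-2)\beta+2n^{2}+4n+1\right)}{(n-(n-1)\beta)(n+2-(n+1)\beta)(n+1-n\beta)^{2}}
\]
and simplify. First, for $n=2$: the bracketed numerator polynomial becomes $7\beta^{2}-22\beta+17$, and the three denominator factors become $n-(n-1)\beta=2-\beta$, $n+2-(n+1)\beta=4-3\beta$, and $n+1-n\beta=3-2\beta$, which produces $(2-\beta)(4-3\beta)(3-2\beta)^{2}$ and hence the first displayed bound. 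Second, for $n=3$: the numerator polynomial becomes $17\beta^{2}-46\beta+31$, and the denominator factors become $3-2\beta$, $5-4\beta$, and $4-3\beta$, yielding the second bound. That is the whole argument; the steps are just these two substitutions together with routine collecting of terms, and one checks in passing that the denominators do not vanish for $\beta\in[0,1)$.

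An equivalent route, which avoids quoting the general-$n$ corollary, is to go back to Theorem \ref{cbnd}: bound $\lvert H_{2}(n)(f)\rvert=\lvert a_{n}a_{n+2}-a_{n+1}^{2}\rvert$ by $\lvert a_{n}\rvert\,\lvert a_{n+2}\rvert+\lvert a_{n+1}\rvert^{2}$, insert $\lvert a_{k}\rvert\le 2/\bigl(k-(k-1)\beta\bigr)$, and place the two resulting fractions over a common denominator for $n=2$ and $n=3$. For the sharpness claim I would test the function $\tilde f_{1}$ of (\ref{tildef2}), whose coefficients are $a_{n}=2\,i^{n-1}/\bigl(n-(n-1)\beta\bigr)$, by forming $a_{n}a_{n+2}-a_{n+1}^{2}$ for $n=2,3$.

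The one point I would treat with care — indeed the only place where anything could go wrong — is this sharpness verification: the bound above is obtained from a triangle inequality, and such bounds are not automatically attained, so one must confirm that the phases of $a_{n},a_{n+1},a_{n+2}$ for the proposed extremal align so that $\lvert a_{n}\rvert\,\lvert a_{n+2}\rvert+\lvert a_{n+1}\rvert^{2}$ is genuinely realized (and, should $\tilde f_{1}$ not achieve it, to exhibit instead a Carath\'{e}odory function whose coefficients $p_{n-1},p_{n},p_{n+1}$ have modulus $2$ with the correct relative phases). Everything else in the corollary is bookkeeping inherited from Theorem \ref{cbnd}.
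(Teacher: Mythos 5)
Your proposal is correct and takes essentially the same route as the paper: the corollary is stated there without separate proof, being just the specialization $n=2,3$ of the preceding general bound, and your substituted numerators $7\beta^{2}-22\beta+17$, $17\beta^{2}-46\beta+31$ and denominator factors agree with the displayed expressions. Your caution about sharpness is well placed but not needed for this statement, which asserts only the inequalities; note in fact that for $\tilde f_{1}$ the products $a_{n}a_{n+2}$ and $a_{n+1}^{2}$ carry the same phase $i^{2n}$, so that function yields the difference rather than the sum of moduli.
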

\begin{theorem}
   If $f\in \mathcal{A}_\beta$ is of the form (\ref{zero}), then
    $$ \lvert J_{2,3}(f) \rvert \leq \dfrac{2}{4-3 \beta}.$$
    The bound is sharp.
\end{theorem}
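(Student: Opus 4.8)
The plan is to transfer the problem to the Carath\'eodory class $\mathcal P$ and reduce it to one clean coefficient functional there. As in the proof of Theorem~\ref{cbnd}, write $\beta f(z)/z+(1-\beta)f'(z)=p(z)=1+p_1z+p_2z^2+\cdots\in\mathcal P$; then (\ref{three}) gives $a_2=p_1/(2-\beta)$, $a_3=p_2/(3-2\beta)$, $a_4=p_3/(4-3\beta)$, so
$$J_{2,3}(f)=a_2a_3-a_4=\frac{1}{4-3\beta}\left(\lambda\,p_1p_2-p_3\right),\qquad \lambda:=\frac{4-3\beta}{(2-\beta)(3-2\beta)}.$$
A short computation gives $1-\lambda=2(1-\beta)^2/\left((2-\beta)(3-2\beta)\right)$, hence $\lambda\in(0,1]$ for all admissible $\beta$. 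Thus $|J_{2,3}(f)|\le 2/(4-3\beta)$ is equivalent to $|p_3-\lambda p_1p_2|\le 2$ for $p\in\mathcal P$ and $\lambda\in[0,1]$.

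The key observation is the convex-combination identity
$$p_3-\lambda p_1p_2=\lambda\,(p_3-p_1p_2)+(1-\lambda)\,p_3,$$
which, together with the Carath\'eodory bound $|p_3|\le 2$, reduces the task to proving $|p_3-p_1p_2|\le 2$; indeed then $|p_3-\lambda p_1p_2|\le \lambda\cdot 2+(1-\lambda)\cdot 2=2$. For this last inequality I would use the rotational invariance of $|p_3-p_1p_2|$ to assume $p_1=c\in[0,2]$ and substitute the Libera--Z\l otkiewicz representations $2p_2=c^2+(4-c^2)x$ and $4p_3=c^3+2c(4-c^2)x-c(4-c^2)x^2+2(4-c^2)(1-|x|^2)y$ with $|x|,|y|\le 1$. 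The terms linear in $x$ then cancel, leaving $4\,(p_3-p_1p_2)=-c^3-c(4-c^2)x^2+2(4-c^2)(1-|x|^2)y$; a triangle-inequality estimate with $u=|x|^2$ gives $|p_3-p_1p_2|\le\frac14\left(c^3+(4-c^2)\left(2+(c-2)u\right)\right)$, and since the coefficient of $u$ is non-positive on $[0,2]$ the supremum is at $u=0$, giving $\frac14\left(c^2(c-2)+8\right)\le 2$.

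For sharpness, take $p(z)=(1+z^3)/(1-z^3)=1+2z^3+2z^6+\cdots\in\mathcal P$, i.e.\ $p_1=p_2=0$ and $p_3=2$; the corresponding $f\in\mathcal A_\beta$ obtained from the relation in the proof of Theorem~\ref{cbnd} has $a_2=a_3=0$ and $a_4=2/(4-3\beta)$, so $|J_{2,3}(f)|=|a_4|=2/(4-3\beta)$ and equality holds throughout.

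There is no real obstacle once the structure is seen: the point is that the $\beta$-dependent functional $\lambda p_1p_2-p_3$ is a convex combination of $p_3-p_1p_2$ and $p_3$, so the estimate is uniform in $\beta$ and collapses to the single, easily handled functional $p_3-p_1p_2$ over $\mathcal P$. One could instead substitute the Libera--Z\l otkiewicz formulas directly into $\lambda p_1p_2-p_3$ and maximize the resulting real function of $c\in[0,2]$ and $|x|\in[0,1]$; that also works but retains a linear-in-$x$ term and is more laborious, so I would avoid it.
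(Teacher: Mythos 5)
Your proof is correct, and it takes a genuinely different route from the paper's. The paper substitutes the Libera--Z\l otkiewicz formulas (\ref{libera}) directly into the full $\beta$-dependent expression for $a_2a_3-a_4$, applies the triangle inequality, and then maximizes the resulting two-variable function $F(p,\rho)$ over $[0,2]\times[0,1]$ by a critical-point analysis. You instead factor out $1/(4-3\beta)$ and note that the remaining functional is $p_3-\lambda p_1p_2$ with $\lambda=(4-3\beta)/\left((2-\beta)(3-2\beta)\right)\in(0,1]$ (indeed $1-\lambda=2(1-\beta)^2/\left((2-\beta)(3-2\beta)\right)\ge 0$), so the convex-combination identity $p_3-\lambda p_1p_2=\lambda\left(p_3-p_1p_2\right)+(1-\lambda)p_3$ together with $\lvert p_3\rvert\le 2$ reduces everything to the single $\beta$-free inequality $\lvert p_3-p_1p_2\rvert\le 2$. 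Your verification of that inequality is sound: rotation invariance of its modulus lets you take $p_1=c\in[0,2]$, after the Libera--Z\l otkiewicz substitution the terms linear in $x$ cancel, the coefficient of $u=\lvert x\rvert^2$ in the resulting estimate is nonpositive, and the remaining bound $\tfrac14\left(c^2(c-2)+8\right)\le 2$ is immediate; equivalently, via $p=(1+\omega)/(1-\omega)$ one has $p_3-p_1p_2=2(c_3-c_1^3)$ for a Schwarz function, so the inequality is a known one. Your sharpness example is the same as the paper's (the function generated by $(1+z^3)/(1-z^3)$, giving $a_2=a_3=0$ and $a_4=2/(4-3\beta)$). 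What your route buys is that the $\beta$-dependence becomes trivial and the optimization collapses to an elementary monotonicity argument, avoiding the paper's more delicate two-variable maximization (where the claimed maximizer $(p,\rho)=(0,0)$ is in fact a corner of the parameter rectangle, so a boundary analysis is really what is required there); the paper's route, in exchange, is a single direct computation that needs no structural observation about convex combinations.
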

\begin{proof}
   Let $f \in \mathcal{A}_\beta$ is given by (\ref{zero}), then from (\ref{three}),  we have
\begin{equation}\label{five}
    \lvert J_{2,3}(f) \rvert = \lvert a_2 a_3 - a_4 \rvert = \left\lvert \frac{p_1 p_2}{(3-2 \beta)(2 - \beta)} - \frac{p_3}{4-3\beta} \right\rvert .
\end{equation}
   For $p(z)=1+ \sum_{n=1}^\infty p_n z^n \in \mathcal{P}$, Libera et al. \cite{Libera} proved that
\begin{equation}\label{libera}
\begin{aligned}
    2 p_2 &= p_1^2 + x (4 -p_1^2), \\
   4 p_3 &= p_1^3 + 2 x p_1 ( 4 - p_1^2 ) - x^2 p_1 (4 - p_1^2 ) +  2 z ( 1 - \lvert x \rvert^2 ) (4 - p_1^2),
\end{aligned}
\end{equation}
   where $\lvert x \rvert \leq 1 $ and $\lvert z \rvert \leq 1.$ Substituting these values of $p_2$ and $p_3$ in (\ref{five}), we obtain
\begin{align*}
   \lvert J_{2,3}(f) \rvert = \bigg\lvert    \frac{p_1^3}{4} \bigg(\frac{2}{2 \beta^2 - 7 \beta + 6 } + \frac{1}{ 3 \beta -4 } \bigg)  &- \frac{p_1 (4-p_1^2) (1 - \beta)^2 x}{(2 - \beta) (3 - 2 \beta) (4 - 3 \beta)} \\
       &+\frac{p_1 (4 - p_1^2) x^2}{4 (4 - 3 \beta)} -\frac{(4 - p^2) (1 - \lvert x \rvert^2) z}{2 (4 - 3 \beta)} \bigg\rvert .
\end{align*}
     Since the class $\mathcal{P}$ is rotationally invariant and it is an easy exercise to check that the class $\mathcal{A}_\beta$ is also rotationally invariant, therefore, without losing generality, we can take $p_1 =  p \in [0,2]$. Now, applying the triangle inequality with $\lvert x \rvert = \rho$, we obtain
\begin{align*}
    \lvert J_{2,3}(f) \rvert \leq    & \frac{p^3}{4} \bigg(\frac{2}{2 \beta^2 - 7 \beta + 6 } + \frac{1}{ 3 \beta -4 } \bigg)  + \frac{p (4-p^2) (1 - \beta)^2 \rho}{(2 - \beta) (3 - 2 \beta) (4 - 3 \beta)} + \frac{4 - p^2}{2 (4 - 3 \beta)}\\
       &+ \rho^2 \bigg( \frac{p (4 - p^2) }{4 (4 - 3 \beta)} - \frac{(4 - p^2)  }{2 (4 - 3 \beta)} \bigg) =: F(p,\rho) .
\end{align*}
   To determine the maximum value of $F(p,\rho)$, first we find out the stationary points, given by the roots of  $ \partial F/\partial p =0$ and $\partial F/\partial \rho =0$, where
\begin{align*}
     \frac{ \partial F(p,\rho)}{\partial p} & = \frac{3 p^2 ( r^2 (2 \beta^2- 7 \beta +6) + 4 r (1 - \beta)^2 + 2 \beta^2 - \beta  -2)}{4 (-2 + \beta) (-3 + 2 \beta) (-4 + 3 \beta)} + p \bigg( \frac{r^2}{4 - 3 \beta} - \frac{1}{4 - 3 \beta} \bigg)\\
        & \hspace{0.5cm}+ \frac{r^2}{4 - 3 \beta } + \frac{ 4 r (1 - \beta)^2}{(4 - 3 \beta) (3 - 2 \beta) (2 - \beta)}.\\
     \frac{ \partial F(p,\rho)}{\partial \rho} &= 2 r \bigg( \frac{ p (4 - p^2)}{4 (4 - 3 \beta)} + \frac{-4 + p^2 }{ 2 (4 - 3 \beta)} \bigg)+ \frac{p (4 - p^2) (1 - \beta)^2}{(4 - 3 \beta) (3 -2 \beta) (2 - \beta)}. \\
\end{align*}
   A simple calculation shows that for $p\in [0,2]$ and $r\in [0,1]$, the stationary point is $(0,0)$ and
   $$ \bigg(\frac{\partial^2 F}{\partial p^2} \frac{\partial^2 F}{\partial \rho^2} - \frac{\partial^2 F}{\partial \rho \partial p}\bigg)_{(p,\rho)=(0,0)} = \frac{4 (8 - 11 \beta + 4 \beta^2)}{(3 -
    2 \beta)^2 (2 - \beta)^2 (4 - 3 \beta))}>0 \;\;\; \text{for all} \;\; \beta \in [0,1].$$
    Thus $F(p,\rho)$ attains either maximum or minimum at $(p,\rho)=(0,0)$. Since, we have
   $$  \bigg(\frac{\partial^2 F}{\partial p^2}\bigg)_{(0,0)} = \frac{-1}{4 - 3 \beta}<0 , \;\;  \bigg(\frac{\partial^2 F}{\partial \rho^2}\bigg)_{(0,0)} = \frac{-4}{4 - 3 \beta} < 0  \;\;\; \text{for all} \;\;\; \beta \in [0,1].$$
   Therefore, $F(p,\rho)$ attain its maximum value at $(p,\rho)=(0,0)$, which is  $ 2/(4 - 3 \beta).$

   Now, to prove the sharpness of the bound, consider the function $\tilde{f}_2 : \mathbb{D} \rightarrow \mathbb{C}$ given by
\begin{equation}\label{six}
   \alpha \frac{\tilde{f}_2(z)}{z}+(1-\alpha) \tilde{f}_2'(z) = \frac{1+z^3}{1-z^3}.
\end{equation}
   If $\tilde{f}_2 (z) = z+ \sum_{n=2}^\infty a_n z^n$, then   $a_2= a_3 =0$ and $a_4 =2/(4 - 3 \beta)$, thus $\lvert J_{2,3}(f) \rvert = 2/(4 - 3 \beta)$.
\end{proof}
\section{Toeplitz and Hermitian-Toeplitz Determinant}\label{sec3}
\begin{theorem}\label{fthm}
   If $f\in \mathcal{A}_\beta$ is of the form (\ref{zero}), then
\begin{enumerate}[(i)]
  \item $ \lvert T_{2,n}(f)\rvert \leq  4 \bigg(  \dfrac{1}{( n - \beta (n-1) )^2}  + \dfrac{1}{( n + 1- n \beta  )^2} \bigg), $
  \item $ \lvert T_{3,1}(f)\rvert  \leq \dfrac{4 \beta^4  - 28 \beta^3 + 101 \beta^2 - 196 \beta  + 140
 }{(3 - 2 \beta)^2 (\beta -2 )^2}.$
\end{enumerate}
   The bounds are sharp.
\end{theorem}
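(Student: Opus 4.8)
The plan for (i) is immediate: expanding the $2\times 2$ determinant and applying the triangle inequality gives $|T_{2,n}(f)|\le |a_n|^2+|a_{n+1}|^2$, and the coefficient bounds of Theorem \ref{cbnd} then yield the stated inequality. For sharpness I would choose the extremal function so that the two entries of the determinant combine with aligned phases (turning the subtraction into addition of moduli): $\tilde f_1$ of (\ref{tildef2}) does this directly when its $n$-th and $(n+1)$-th coefficients are already in phase, and otherwise a rotation $\bar\lambda\,\tilde f(\lambda z)$ with $\lambda=e^{i\pi/(2n-2)}$ of the extremal function (\ref{seven}) — which again lies in $\mathcal A_\beta$ — realizes equality.

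For (ii) I would start from the identity $T_{3,1}(f)=1-2a_2^2+2a_2^2a_3-a_3^2$ (the third-order Toeplitz determinant) together with the factorisation
\[
T_{3,1}(f)=(1-a_3)\bigl(1+a_3-2a_2^2\bigr),
\]
and write, using (\ref{three}), $a_2=p_1/(2-\beta)$ and $a_3=p_2/(3-2\beta)$ with $p\in\mathcal P$. The crucial observation is that $T_{3,1}$ is not a homogeneous functional, so $|T_{3,1}(f)|$ is \emph{not} invariant under rotations of $f$ (in contrast with $J_{2,3}$ treated above), and therefore $p_1$ may \emph{not} be normalised to be real. I would keep $p_1$ complex and use the sharp second-coefficient estimate for $\mathcal P$ in its correct form $2p_2=p_1^2+(4-|p_1|^2)x$ with $|x|\le1$ (the Libera–Z\l otkiewicz relation with $|p_1|^2$). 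The first factor is then bounded trivially: $|1-a_3|\le 1+|a_3|\le 1+\tfrac{2}{3-2\beta}$, since $|p_2|\le 2$.

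For the second factor, substitution gives
\[
1+a_3-2a_2^2=1+\frac{(\beta^2+4\beta-8)\,p_1^2}{2(3-2\beta)(2-\beta)^2}+\frac{(4-|p_1|^2)\,x}{2(3-2\beta)},
\]
and, since $8-4\beta-\beta^2>0$ on $[0,1]$, the triangle inequality yields for $u:=|p_1|^2\in[0,4]$
\[
\bigl|1+a_3-2a_2^2\bigr|\le 1+\frac{(8-4\beta-\beta^2)\,u}{2(3-2\beta)(2-\beta)^2}+\frac{4-u}{2(3-2\beta)} .
\]
The right-hand side is increasing in $u$ (its $u$-derivative is $\tfrac{2-\beta^2}{(3-2\beta)(2-\beta)^2}>0$), hence it is maximal at $u=4$, where the $x$-term disappears and the value is $1-\tfrac{2}{3-2\beta}+\tfrac{8}{(2-\beta)^2}$. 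Multiplying the two bounds and simplifying,
\[
|T_{3,1}(f)|\le\Bigl(1+\tfrac{2}{3-2\beta}\Bigr)\Bigl(1-\tfrac{2}{3-2\beta}+\tfrac{8}{(2-\beta)^2}\Bigr)=\frac{4\beta^4-28\beta^3+101\beta^2-196\beta+140}{(3-2\beta)^2(\beta-2)^2},
\]
which is the claimed estimate. Sharpness I would obtain from $\tilde f_3$ defined by $\beta\,\tilde f_3(z)/z+(1-\beta)\tilde f_3'(z)=(1+iz)/(1-iz)$: then $p_1=2i$, $p_2=-2$, so $a_2=2i/(2-\beta)$ and $a_3=-2/(3-2\beta)$, both $1-a_3$ and $1+a_3-2a_2^2$ are positive reals, and their product equals the right-hand side above.

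The genuinely delicate points — where I expect most of the care to go — are the two flagged above. First, the reduction to a real parameter $p_1$ that worked for $J_{2,3}$ is unavailable here, so the optimisation must honestly run over complex $p_1$; after the triangle inequality this fortunately collapses to the single real variable $u=|p_1|^2$, but one must recognise this rather than imitate the earlier argument. Second, one has to use the $|p_1|^2$-form of the coefficient inequality and not the $p_1^2$-form written in (\ref{libera}): with the latter the term $(4-|p_1|^2)x$ is replaced by $(4-p_1^2)x$, the triangle inequality only contributes $|4-p_1^2|\le 4+u$, and one ends up with the strictly larger, non-sharp bound $\bigl(1+\tfrac{2}{3-2\beta}\bigr)\bigl(1+\tfrac{2}{3-2\beta}+\tfrac{8}{(2-\beta)^2}\bigr)$.
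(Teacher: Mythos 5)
Your argument is correct and delivers exactly the stated bounds, but part (ii) is organized differently from the paper. For (i) you do precisely what the paper does (triangle inequality plus Theorem \ref{cbnd}); note only that $\tilde f_1$ already realizes equality for \emph{every} $n$, since its coefficients satisfy $a_n^2=4(-1)^{n-1}/(n-(n-1)\beta)^2$, so consecutive squares automatically have opposite signs and your fallback rotation (whose exponent $e^{i\pi/(2n-2)}$ would in any case not produce the required relative phase $\pi$ between $a_n^2$ and $a_{n+1}^2$ for general $n$) is never needed. For (ii) the paper does not factor the determinant: it writes $T_{3}(1)(f)=1-2a_2^2-a_3(a_3-2a_2^2)$, bounds $|a_3-2a_2^2|=\tfrac{1}{3-2\beta}\bigl|p_2-\mu p_1^2\bigr|$ with $\mu=2(3-2\beta)/(2-\beta)^2>1$ by the known sharp inequality $|p_2-\mu p_1^2|\le 4\mu-2$, and then inserts $|a_2|\le 2/(2-\beta)$, $|a_3|\le 2/(3-2\beta)$; this sidesteps your (legitimate) worry about rotation-invariance because that inequality holds for arbitrary complex $p_1$ with no normalization. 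Your route — the factorization $(1-a_3)(1+a_3-2a_2^2)$ together with the complex-$p_1$ form $2p_2=p_1^2+(4-|p_1|^2)x$ of the Libera--Z\l otkiewicz relation and the monotonicity in $u=|p_1|^2$ — in effect re-derives that same inequality from scratch; the two estimates expand to the identical expression $1+\tfrac{8}{(2-\beta)^2}+\tfrac{16}{(3-2\beta)(2-\beta)^2}-\tfrac{4}{(3-2\beta)^2}$, and your extremal function (generated by $(1+iz)/(1-iz)$, i.e.\ $p_n=2i^n$) is exactly the paper's $\tilde f_1$ of (\ref{tildef2}). What your version buys is a self-contained and explicitly verified sharpness computation and a transparent account of why the complex case causes no loss; what the paper's version buys is brevity, by outsourcing the optimization to a standard functional estimate. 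One cosmetic caution: the quantity being bounded here is the Toeplitz determinant $T_3(1)(f)=1-2a_2^2+2a_2^2a_3-a_3^2$ (despite the Hermitian-looking notation $T_{3,1}$ in the theorem statement), and your identity is consistent with the paper's proof on this point.
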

\begin{proof}
   From (\ref{tplz}), it follows that
\begin{align*}
    \lvert T_{2,n}(f)\rvert = \lvert a_{n}^2 - a_{n+1}^2 \rvert &\leq \lvert a_{n} \rvert^2 + \lvert a_{n+1} \rvert^2 .
\end{align*}
   Using the bound of $\lvert a_n \rvert$ from (\ref{two}), required bound of $\lvert T_{2,n}(f)\rvert$ follows directly
    and equality case holds for the function $\tilde{f}_1$ given by (\ref{tildef2}).

    Now we proceed for $\lvert T_{3,1}(f)\rvert$. Again from (\ref{tplz}), we have
\begin{equation}\label{four}
    \lvert T_{3,1}(f) \rvert =\lvert 1 - 2 a_2^2 +2 a_2^2 a_3 - a_3^2 \rvert \leq 1 + 2 \lvert a_2 \rvert^2 + \lvert a_3 \rvert \lvert a_3 - 2 a_2^2 \rvert.
\end{equation}
   By (\ref{three}),
   $$ \lvert a_3 - 2 a_2^2 \rvert = \frac{1}{3- 2 \beta} \left\lvert p_2 - \frac{2 (3 - 2 \beta)}{(2 - \beta)^2} p_1^2 \right\rvert. $$
   Applying the  well known result $\lvert p_2 - \mu p_1^2 \rvert \leq 4 \mu -2$ for $\mu >1$ (see \cite{MaMinda}), we obtain
   $$ \lvert a_3 - 2 a_2^2 \rvert  \leq \frac{8}{(2- \beta)^2} - \frac{2}{3 - 2 \beta}.$$
   Using this bound of $\lvert a_3 - 2 a_2^2 \rvert$ and the bounds of $\lvert a_2 \rvert$, $\lvert a_3 \rvert$ from (\ref{two}) in (\ref{four}), required bound of $\lvert T_{3,1}(f) \rvert$ follows. Sharpness of the bound of $\lvert T_{3,1}(f) \rvert$ follows from the function $\tilde{f}_1$.
\end{proof}
\begin{remark}
   The bounds of $\lvert T_{2,n}(f)\rvert$ and $\lvert T_{3,1}(f)\rvert$ for the class $\mathcal{R}$ follow from Theorem \ref{fthm}, when $\beta =0$ \cite[Theorem 2.12]{vasu}.
\end{remark}
\begin{theorem}\label{LHT}
    If $f\in \mathcal{A}_\beta$ is of the form (\ref{zero}), then
\begin{equation}\label{eight}
\begin{aligned}
     {T_{3,1}(f)}\leq \left\{
    \begin{array}{ll}
    \dfrac{4 \beta^4 - 28 \beta^3 + 37 \beta^2 - 4 \beta -4 }{(3 - 2 \beta)^2 (2 - \beta)^2}; &\frac{10-\sqrt{10}}{9}  \leq \beta \leq 1, \\  \\
    1; & 0\leq \beta \leq \frac{10-\sqrt{10}}{9}.
    \end{array}
    \right.
\end{aligned}
\end{equation}
   The bounds are sharp.
\end{theorem}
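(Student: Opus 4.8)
\noindent The plan is to pass to Carath\'eodory coefficients and then use a completing‑the‑square identity for $T_{3,1}$ adapted to bounding the \emph{signed} quantity rather than its modulus (as was done for $\lvert T_{3,1}\rvert$ in Theorem \ref{fthm}). By (\ref{three}) we have $a_2=p_1/(2-\beta)$ and $a_3=p_2/(3-2\beta)$ for some $p(z)=1+\sum_{n\ge 1}p_n z^n\in\mathcal{P}$, and a direct rearrangement of $T_{3,1}(f)=1-2\lvert a_2\rvert^2+2\RE(a_2^2\bar a_3)-\lvert a_3\rvert^2$ yields
$$ T_{3,1}(f)=\bigl(1-\lvert a_2\rvert^2\bigr)^2-\bigl\lvert a_3-a_2^2\bigr\rvert^2 . $$
Thus maximising $T_{3,1}$ amounts to keeping $\lvert a_2\rvert$ fixed while making $\lvert a_3-a_2^2\rvert$ as small as possible.

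Using the rotational invariance of $\mathcal{A}_\beta$ and $\mathcal{P}$ (as in the previous proof), I would normalise $p_1=p\in[0,2]$, so that $\lvert a_2\rvert^2=p^2/(2-\beta)^2$. Substituting the Libera representation $2p_2=p^2+x(4-p^2)$, $\lvert x\rvert\le 1$, from (\ref{libera}) gives
$$ a_3-a_2^2=-A+Bx,\qquad A=\frac{(2-\beta^2)\,p^2}{2(3-2\beta)(2-\beta)^2}\ge 0,\qquad B=\frac{4-p^2}{2(3-2\beta)}\ge 0 . $$
Since $A\ge 0$ is real, $\min_{\lvert x\rvert\le 1}\lvert Bx-A\rvert=\max\{0,A-B\}$, and $A\le B$ is equivalent to $p^2\le p_0^2:=2(2-\beta)^2/(3-2\beta)$. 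For $0\le p^2\le p_0^2$ one may choose $x$ so that $a_3=a_2^2$, whence $T_{3,1}\le(1-p^2/(2-\beta)^2)^2\le 1$. For $p_0^2<p^2\le 4$ one has $T_{3,1}\le(1-p^2/(2-\beta)^2)^2-(A-B)^2$, and with $c=(2-\beta)^2$, $d=3-2\beta$, $q=p^2$ the right‑hand side, being a difference of squares, factors to the affine function
$$ T_{3,1}\le\frac{(d-2)\bigl(c(d+2)-2dq\bigr)}{c\,d^2}. $$

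Being affine in $q$ on $[p_0^2,4]$, this bound is maximal at an endpoint. At $q=p_0^2$ its value is $(d-2)^2/d^2=(1-2\beta)^2/(3-2\beta)^2\le 1$, while at $q=4$ it simplifies, via the identity $(1-2\beta)\bigl(c(d+2)-8d\bigr)=4\beta^4-28\beta^3+37\beta^2-4\beta-4$, to
$$ B_1(\beta):=\frac{4\beta^4-28\beta^3+37\beta^2-4\beta-4}{(2-\beta)^2(3-2\beta)^2}. $$
Hence $T_{3,1}(f)\le\max\{1,B_1(\beta)\}$ for all $\beta\in[0,1]$. A short computation shows $B_1(\beta)\ge 1$ if and only if $9\beta^2-20\beta+10\le 0$, i.e. $\beta\in[\tfrac{10-\sqrt{10}}{9},\tfrac{10+\sqrt{10}}{9}]$; since $\tfrac{10+\sqrt{10}}{9}>1$, this is exactly the two‑branch estimate (\ref{eight}). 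Equality at $q=4$ is attained by $p(z)=(1+z)/(1-z)$, that is by $\tilde f$ from (\ref{seven}) with $a_n=2/(n-(n-1)\beta)$, for which $T_{3,1}(\tilde f)=B_1(\beta)$; the value $1$ is attained by $p\equiv 1$, i.e. by the identity $f(z)=z\in\mathcal{A}_\beta$.

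The step I expect to be most delicate is the handling of the second range: one must check $p_0^2<4$ for every $\beta\in[0,1]$ (so the interval is non‑degenerate), carry out the polynomial simplification leading to $B_1(\beta)$ without error, and confirm that evaluating the affine bound only at the two endpoints $q=p_0^2$ and $q=4$ genuinely captures the maximum, so that no intermediate configuration is missed. The identity $T_{3,1}=(1-\lvert a_2\rvert^2)^2-\lvert a_3-a_2^2\rvert^2$ is what makes the extremal configurations transparent; without it one would fall back on a two‑real‑variable optimisation of the type carried out above for the Zalcman functional, which is feasible but considerably messier.
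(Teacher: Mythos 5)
Your proposal is correct, and all the computations I checked go through: the identity $T_{3,1}(f)=(1-\lvert a_2\rvert^2)^2-\lvert a_3-a_2^2\rvert^2$ is an exact rearrangement, the values $A,B$, the threshold $p_0^2=2(2-\beta)^2/(3-2\beta)$ (which is indeed $<4$ since this is equivalent to $\beta^2<2$), the factorisation into the affine function of $q$, the endpoint values $(1-2\beta)^2/(3-2\beta)^2$ and $B_1(\beta)$, and the equivalence $B_1(\beta)\ge 1\iff 9\beta^2-20\beta+10\le 0$ all check out, as do the extremal functions $f(z)=z$ and $\tilde f$ from (\ref{seven}). However, your route is genuinely different from the paper's. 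The paper does not use (\ref{libera}) here at all: it first discards the phase information via $\RE(a_2^2\bar a_3)\le\lvert a_2\rvert^2\lvert a_3\rvert$, and then treats $\lvert a_2\rvert$ and $\lvert a_3\rvert$ as \emph{independent} variables constrained only by the individual bounds of Theorem \ref{cbnd}, maximising the quadratic $g(x)=1+2\lvert a_2\rvert^2x-2\lvert a_2\rvert^2-x^2$ in $x=\lvert a_3\rvert$ (vertex inside or outside the admissible range), which leads to the same two branches with $\beta_0=(10-\sqrt{10})/9$. That argument is shorter, but its sharpness rests on the fortunate fact that decoupling $a_2$ and $a_3$ loses nothing for this functional; your argument keeps the exact dependence between $p_1$ and $p_2$ through the Libera--Z\l otkiewicz representation, makes the extremal configurations ($a_3=a_2^2$ versus $p=2$) transparent, and reduces the second range to maximising an affine function of $q=p^2$ at its endpoints. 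The only points you left implicit — that $p^2\le p_0^2$ forces $p^2/(2-\beta)^2\le 2/(3-2\beta)\le 2$ so the first branch is $\le 1$, and that $p_0^2<4$ — are one-line verifications, so there is no substantive gap.
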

\begin{proof}
     For $f(z) = z+ \sum_{n=2}^\infty a_n z^n \in \mathcal{A}_\beta $, Theorem \ref{cbnd} yields
    $$ \lvert a_2 \rvert \leq \frac{2}{2-\beta} \;\; \text{and} \;\; \lvert a_3 \rvert \leq \frac{2}{3 - 2 \beta}.$$
    Hence $\lvert a_2 \rvert \in [0, 2]$ and $\lvert a_3 \rvert \in [0, 2]$ for $\beta \in [0,1].$
     From (\ref{htplz}), we have
\begin{align*}
      {T}_{3,1}(f) &= 1 + 2 \RE (a_2^2 \bar{a}_3) - 2 \lvert a_2 \rvert^2 - \lvert a_3 \rvert^2 \\
                        &\leq  1 + 2  \lvert a_2 \rvert^2 \lvert a_3\rvert - 2 \lvert a_2 \rvert^2 - \lvert a_3 \rvert^2 =: g(\lvert a_3 \rvert),
\end{align*}
    where $g(x) =  1 + 2  \lvert a_2 \rvert^2 x - 2 \lvert a_2 \rvert^2 - x^2$ with $x= \lvert a_3 \rvert \in [0,2]$. Since
    $ g'(x)=0 $ at $x_0:= \lvert a_2 \rvert^2$ and $g''(x_0)<0$, therefore $g(x)$ attains its maximum value at $x = x_0$, whenever $\lvert a_2 \rvert^2$ belongs to the range of $x$, that means $\lvert a_2 \rvert^2  \leq 2$. Thus
\begin{align*}
     {T}_{3,1}(f) \leq g(\lvert a_2\rvert^2) &= (\lvert a_2 \rvert^2 -1)^2 \\
                                                &\leq   1  \quad \text{when} \quad \lvert a_2 \rvert^2 \leq 2, \\
      & = 1 \quad \text{when} \quad 0\leq \beta \leq 2 -\sqrt{2}.
\end{align*}
     Now, the other case, when $\lvert a_2 \rvert^2$ does not lie in the range of $x$, that is $ \lvert a_2 \rvert^2 > 2$ or $ 2 -\sqrt{2} \leq \beta \leq 1$, then
\begin{align*}
    {T}_{3,1}(f) &\leq \max{g(x)}  = g \bigg( \frac{2}{3- 2 \beta}\bigg)\\
                     &= 1 - 2 a_2^2 - \frac{4}{(3 - 2 \beta)^2} + \frac{4 a_2^2}{3 - 2 \beta} \\
                     &\leq \frac{4 \beta^4 - 28 \beta^3 + 37 \beta^2 - 4 \beta -4 }{(3 - 2 \beta)^2 (2 - \beta)^2}.
\end{align*}
    Using all these above arguments, we obtain
\begin{align*}
     {T_{3,1}(f)}\leq \left\{
    \begin{array}{ll}
    1, & 0\leq \beta \leq \beta_0; \\
    \frac{4 \beta^4 - 28 \beta^3 + 37 \beta^2 - 4 \beta -4 }{(3 - 2 \beta)^2 (2 - \beta)^2}, & \beta_0 \leq \beta \leq 1,
    \end{array}
    \right.
\end{align*}
   where $\beta_0 = (10 - \sqrt{10})/9$ is the root of the equation $9 \beta^2 - 20 \beta +10 =0 .$

   The sharpness of the bound follows from $f(z)=z$ when $0\leq \beta \leq ({10-\sqrt{10}})/{9}$. However, for $({10-\sqrt{10}})/{9} \leq \beta \leq 1$, equality in (\ref{eight}) holds for the function $\tilde{f}$ given in (\ref{seven}).
\end{proof}
\begin{remark}
     For $\beta =0$ in Theorem \ref{LHT}, we obtain $ T_{3,1}(f) \leq 1$ for $f\in \mathcal{R}$ \cite[Example 2.4]{VkumarT}.
\end{remark}
\begin{theorem}
    If $f\in \mathcal{A}_\beta$ is of the form (\ref{zero}), then
    $$  T_{3,1}(f)\geq  1 - \frac{ 4 \beta -9}{\beta^4 - 4 \beta^3 + 2 \beta^2 + 8 \beta -8}.$$
    The bound is sharp.
\end{theorem}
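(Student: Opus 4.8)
\emph{Proof proposal.} The plan is to reuse the identity $T_{3,1}(f)=1+2\RE(a_2^2\bar a_3)-2\lvert a_2\rvert^2-\lvert a_3\rvert^2$ established in the proof of Theorem~\ref{LHT}, substitute $a_2=p_1/(2-\beta)$ and $a_3=p_2/(3-2\beta)$ from (\ref{three}) for the associated Carath\'eodory function $p$, invoke the rotational invariance of $\mathcal A_\beta$ to assume $p_1=p\in[0,2]$, and then insert the first Libera relation $2p_2=p^2+x(4-p^2)$ from (\ref{libera}) with $x=\rho e^{i\phi}$, $\rho\in[0,1]$. This renders $T_{3,1}(f)$ an explicit function $G(p,\rho,\phi)$ which depends on $\phi$ only through $\cos\phi$, and does so \emph{affinely}, with coefficient equal to $p^2\rho(4-p^2)$ times the positive quantity $(2-\beta^2)/\big(2(2-\beta)^2(3-2\beta)^2\big)$. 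Since $2-\beta^2>0$ on $[0,1]$, the minimum over $\phi$ is attained at $\cos\phi=-1$, i.e.\ at $x=-\rho$ real, making $p_2=(p^2-\rho(4-p^2))/2$ real.

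With $x$ real I would view the remaining quantity as a quadratic in $u:=p^2-\rho(4-p^2)$, a decreasing function of $\rho$ for fixed $p$. Its leading coefficient in $u$ is negative and its vertex sits at $u^\ast=2p^2(3-2\beta)/(2-\beta)^2>p^2$, the inequality holding because $2(3-2\beta)>(2-\beta)^2$ for $\beta\in[0,1]$; hence $G$ is increasing in $u$ on the admissible range $u\in[2p^2-4,\,p^2]$, so the minimum forces $\rho=1$ and $p_2=p^2-2$. Writing $q:=p^2\in[0,4]$, what survives is a quadratic in $q$ with positive leading coefficient $(2-\beta^2)/\big((2-\beta)^2(3-2\beta)^2\big)$, whose vertex $q_0=(2\beta^2-8\beta+7)/(2-\beta^2)$ lies in $(0,4)$ for all $\beta\in[0,1]$ (because $2\beta^2-8\beta+7>0$ and $6\beta^2-8\beta-1<0$ on that interval). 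Therefore the global minimum of $G$ is its value at $q=q_0$.

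Evaluating, $T_{3,1}(f)\ge 1-\dfrac{4}{(3-2\beta)^2}-\dfrac{(2\beta^2-8\beta+7)^2}{(2-\beta^2)(2-\beta)^2(3-2\beta)^2}$, and the proof is completed by the polynomial identity $4(2-\beta^2)(2-\beta)^2+(2\beta^2-8\beta+7)^2=(9-4\beta)(3-2\beta)^2$ (both sides equal $81-144\beta+84\beta^2-16\beta^3$), which collapses the right-hand side to $1-(9-4\beta)/\big((2-\beta^2)(2-\beta)^2\big)$; since $(2-\beta^2)(2-\beta)^2=-(\beta^4-4\beta^3+2\beta^2+8\beta-8)$, this is precisely $1-(4\beta-9)/(\beta^4-4\beta^3+2\beta^2+8\beta-8)$. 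For sharpness, the minimizing data $p_1=\sqrt{q_0}$, $x=-1$ is produced by $p(z)=(1-z^2)/(1-2\cos\theta\,z+z^2)$ with $\cos\theta=\tfrac12\sqrt{q_0}\in[0,1]$ — the average of the Herglotz kernels at $e^{\pm i\theta}$ — so equality holds for the $f\in\mathcal A_\beta$ solving $\beta f(z)/z+(1-\beta)f'(z)=(1-z^2)/(1-2\cos\theta\,z+z^2)$; any degenerate value of $\cos\theta$ (namely $0$ or $1$) is covered by $f(z)=z$ or by $\tilde f$ of (\ref{seven}), as in Theorem~\ref{LHT}.

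\textbf{Main obstacle.} None of the steps is conceptually deep once the Libera substitution is made; the real work is the pair of one-variable optimizations — in particular checking \emph{uniformly in} $\beta\in[0,1]$ that $u^\ast>p^2$ and that $q_0$ stays inside $[0,4]$ with the parabola opening the right way — and then the closing polynomial identity that reconciles the bulky minimizing value with the compact form in the statement. I expect the $q$-optimization together with that identity to be where arithmetic or sign slips are most likely to occur.
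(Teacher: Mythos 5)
Your proposal is correct and follows essentially the same route as the paper's proof: relation (\ref{three}) together with Libera's representation of $p_2$, rotational invariance to take $p_1=p\in[0,2]$, pushing the extremum to $x=-1$ (the paper's use of $\RE\zeta\geq-\lvert\zeta\rvert$ followed by monotonicity in $\lvert\zeta\rvert$), and a final one-variable minimization at $q_0=(2\beta^2-8\beta+7)/(2-\beta^2)$ with the same extremal function $\tilde{f}_3$. Your vertex-of-parabola bookkeeping in $u$ and $q$ merely replaces the paper's partial-derivative analysis of $g_1(p,y)$ and $g_2(p)$, and your closing polynomial identity correctly collapses the minimum value to the stated bound.
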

\begin{proof}
     Let $f \in \mathcal{A}_\beta$, then from (\ref{three}), we have
     $$ a_2 =\frac{p_1}{2 - \beta},\;\;\;  a_3 = \frac{p_2}{3 - 2 \beta}.$$
     Now, by replacing $p_2$ in terms of $p_1$ using (\ref{libera}), we get
\begin{align*}
   2 \RE (a_2^2 \bar{a}_3) &= \frac{p_1^4 + p_1^2 (4 - p_1^2) \RE{\zeta}}{(3 - 2 \beta) (2 - \beta)^2},  \quad - \lvert a_2 \rvert^2 = \frac{\lvert p_1 \rvert^2}{(2-\beta)^2}, \\
    -\lvert a_3\rvert^2  &= -\frac{p_1^4 + (4 - p_1^2)^2 \lvert \zeta\rvert^2 + 2 p_1^2 (4 - p_1^2) \RE{\zeta}}{4 (3 - 2 \beta)^2}.
\end{align*}
    A simple computation yields that
\begin{align*}
     T_{3,1}(f) = 1 &+  \frac{1}{4 (3 - 2 \beta)^2 (2 - \beta)^2}\bigg( p_1^4 (8 - 4 \beta - \beta^2) -8 p_1^2 (3 - 2 \beta)^2 \\
      &- (4 - p_1^2)^2 (2 - \beta)^2 \lvert \zeta\rvert^2 + 2 p_1^2 (4 - p_1^2) (2 - \beta^2) \RE{\zeta}\bigg)\\
              &=: g(p_1, \zeta, \RE(\zeta)).
\end{align*}
   Since the classes $\mathcal{A}_\beta$ and $\mathcal{P}$ are rotationally invariant, we can take $p=p_1\in [0,2]$. Using $\RE(\zeta)\geq - \lvert \zeta \rvert$ with notation $\lvert \zeta \rvert =y$, we have $g(p_1, \lvert \zeta \rvert , \RE{\zeta} ) \geq g_1(p, y)$, where
\begin{align*}
    g_1(p,y)= 1 +  \frac{1}{4 (3 - 2 \beta)^2 (2 - \beta)^2}\bigg( & p^4 (8 - 4 \beta - \beta^2) -8 p^2 (3 - 2 \beta)^2  \\
      &- (4 - p^2)^2 (2 - \beta)^2 y^2 - 2 p^2 (4 - p^2) (2 - \beta^2) y\bigg).
\end{align*}
     Also, note that
     $$\frac{\partial g_1(p,y)}{\partial y} = -\frac{2 (4 - p^2)^2 y (2 - \beta)^2 + 2 p^2 (4 - p^2) (2 - \beta^2)}{4 (3 - 2 \beta)^2 (2 - \beta)^2}<0$$
     for all $p\in [0,2]$ and $\beta \in [0,1]$. Hence $g_1(p,y)$ is a decreasing function of $y$ with $g_1(p,y)\leq g_1(p,1)=:g_2(p)$. Minimum of $g_2(p)$ is the lower bound of $\det T_{3,1}(f)$. The equation $g_{2}'(p)=0$ gives the following critical points
     $$ p^{(1)}=0, \quad  p^{(2)}= \pm \sqrt{\frac{(2 \beta^2 - 8 \beta +7)}{(2-\beta^2)}}.$$
     Using the basic calculus rule, it can be easily observed that the function $g_2(p)$ attains its minimum value at $p^{(2)}$ as $g''(p^{(2)})>0$ for all $\beta \in [0,1]$. Thus
     $$ \det T_{3,1}(f) \geq g_2(p^{(2)}) =  1 - ( 4 \beta - 9)/(  \beta^4 - 4 \beta^3 + 2 \beta^2 + 8 \beta -8 ).$$
     To show the sharpness consider the function $\tilde{f}_3 \in \mathcal{A}$ given by
     $$ \beta \frac{\tilde{f}_3(z)}{z} + (1- \beta) \tilde{f}'_3(z) = \frac{1 - z^2 }{ 1 - z \sqrt{(2 \beta^2 - 8 \beta +7)/(2 - \beta^2)} + z^2}. $$
     For $\tilde{f}_3(z) = z  + \sum_{n=2}^\infty a_n z^n $, we have
     $$ a_2 =\frac{1}{2 - \beta} \sqrt{\frac{ 2 \beta^2 - 8 \beta + 7}{ 2 - \beta^2}}, \quad a_3 = \frac{1 - 2\beta }{ 2 - \beta^2}$$
     and $ T_{3,1}(\tilde{f}_3) = 1 - ( 4 \beta - 9)/(  \beta^4 - 4 \beta^3 + 2 \beta^2 + 8 \beta -8 ).$
\end{proof}
\begin{remark}
     For $\beta =0$ in Theorem \ref{LHT}, we obtain $\det T_{3,1}(f) \geq -1/8$ for $f\in \mathcal{R}$ \cite[Example 2.4]{VkumarT}.
\end{remark}
\section{Coefficient Difference}
    Robertson \cite{Robertson} proved that $ 3 \lvert a_{n+1} - a_n \rvert \leq (2 n + 1) \lvert a_2 -1 \rvert$ for the class of convex functions. Recently, Li and Sugawa \cite{Li} obtained the bound of $\lvert a_{n+1} - a_n \rvert$ for particular choices of $n$ for the class of convex function with fixed second coefficient.
    In this section, we find the the bound of $\lvert a_{n+1}^N - a_n^N \rvert$ $(N \in \mathbb{N})$ depending on the second coefficient for $f\in \mathcal{A}_\beta$. In fact, it is more convenient to express our result in terms of $p_1=p$, applying the correspondence
    $$ (2-\beta ) a_2 =p_1 =p.$$
    To make the results more legible, we define the class $\mathcal{A}_\beta(p)$, $p \in [-2,2]$  as follows
    $$ \mathcal{A}_\beta (p) = \{ f\in \mathcal{A}_\beta : f''(0)=p\}.   $$
    Clearly,
   $$ \bigcup_{-2 \leq p \leq 2} \mathcal{A}_\beta (p) \subset \mathcal{A}_\beta \quad \text{and} \quad  \bigcup_{-2 \leq p \leq 2}\mathcal{A}_\beta (p) \neq \mathcal{A}_\beta . $$
   The following lemmas are used to establish our main results.
\begin{lemma}\label{brownl}\cite{Brown}
   If $p(z) = 1+ \sum_{n=1}^\infty p_n z^n \in \mathcal{P}$, then the following estimate holds:
   $$ \lvert p_{n+1}^N -p_n^N \rvert \leq 2^N \sqrt{2 - 2^{1-N} \RE (p_1^N)} \quad (N \in \mathbb{N}). $$
   Equality holds for the function $(1+ e^{i \alpha } z)/(1- e^{i \alpha } z),$ where $\alpha = \cos^{-1}(b/2)$ and $\RE{p_1} = 2b.$
\end{lemma}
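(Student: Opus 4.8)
The plan is to run everything through the Herglotz representation of the Carath\'eodory class. Recall that $p\in\mathcal{P}$ with $p(0)=1$ admits a representation $p(z)=\int_{\mathbb{T}}\frac{1+\zeta z}{1-\zeta z}\,d\mu(\zeta)$ for some probability measure $\mu$ on the unit circle $\mathbb{T}=\partial\mathbb{D}$, and expanding the kernel gives $p_n=2\int_{\mathbb{T}}\zeta^{n}\,d\mu(\zeta)$ for every $n\ge 1$. The one genuinely useful idea is that the $N$-th power becomes linear once we pass to the product measure: by Fubini,
\[
p_n^{N}=2^{N}\int_{\mathbb{T}^{N}}(\zeta_1\zeta_2\cdots\zeta_N)^{n}\,d\mu^{\otimes N}(\zeta_1,\dots,\zeta_N),
\]
so, writing $\eta:=\zeta_1\zeta_2\cdots\zeta_N\in\mathbb{T}$, we have $p_n^{N}=2^{N}\int_{\mathbb{T}^{N}}\eta^{n}\,d\mu^{\otimes N}$ and therefore $p_{n+1}^{N}-p_n^{N}=2^{N}\int_{\mathbb{T}^{N}}\eta^{n}(\eta-1)\,d\mu^{\otimes N}$. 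Equivalently, $p_n^{N}/2^{N-1}$ is the $n$-th Taylor coefficient of the member of $\mathcal{P}$ obtained as the Herglotz integral against the push-forward of $\mu^{\otimes N}$ under $(\zeta_1,\dots,\zeta_N)\mapsto\zeta_1\cdots\zeta_N$, so the statement really reduces to the case $N=1$.

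Next I would apply the triangle inequality and then Cauchy--Schwarz. Since $|\eta^{n}|=1$ and $\mu^{\otimes N}$ is a probability measure,
\[
\bigl|p_{n+1}^{N}-p_n^{N}\bigr|\le 2^{N}\int_{\mathbb{T}^{N}}|\eta-1|\,d\mu^{\otimes N}\le 2^{N}\Bigl(\int_{\mathbb{T}^{N}}|\eta-1|^{2}\,d\mu^{\otimes N}\Bigr)^{1/2}.
\]
Then I would evaluate the remaining integral: $|\eta-1|^{2}=2-\eta-\bar\eta$ and $\int_{\mathbb{T}^{N}}\eta\,d\mu^{\otimes N}=\prod_{j=1}^{N}\int_{\mathbb{T}}\zeta_j\,d\mu=(p_1/2)^{N}$, whence
\[
\int_{\mathbb{T}^{N}}|\eta-1|^{2}\,d\mu^{\otimes N}=2-2\,\RE\bigl((p_1/2)^{N}\bigr)=2-2^{1-N}\,\RE(p_1^{N}),
\]
which is exactly the quantity under the square root in the claimed bound.

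For sharpness I would take $\mu$ to be the unit point mass at $\zeta=e^{i\alpha}$, i.e.\ $p(z)=(1+e^{i\alpha}z)/(1-e^{i\alpha}z)$; then $\mu^{\otimes N}$ is concentrated at a single point, both the triangle inequality and the Cauchy--Schwarz step above collapse to equalities, and the direct computation with $p_n=2e^{in\alpha}$ gives $|p_{n+1}^{N}-p_n^{N}|=2^{N}\sqrt{2-2\cos(N\alpha)}$, which agrees with the right-hand side. There is no real analytic obstacle here; the only points requiring care are the bookkeeping of the constants $2$, $2^{N}$, $2^{1-N}$ and matching the parametrization $\RE p_1=2b$, $\alpha=\cos^{-1}(b/2)$ to this extremal function. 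One could try to avoid measure theory by exploiting that $\mathcal{P}$ is a compact convex set with extreme points $(1+\zeta z)/(1-\zeta z)$, $|\zeta|=1$, but since the functional $p\mapsto|p_{n+1}^{N}-p_n^{N}|$ is neither linear nor convex for $N\ge 2$, the Herglotz-integral formulation above is the cleaner route.
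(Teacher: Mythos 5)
Your proof is correct, and there is nothing in the paper to compare it against in detail: the authors do not prove this lemma, they simply quote it from Brown \cite{Brown}. Your Herglotz route is a legitimate self-contained derivation: $p_n=2\int_{\mathbb{T}}\zeta^n\,d\mu$, Fubini to linearize the $N$-th power over $\mu^{\otimes N}$, the triangle inequality followed by Cauchy--Schwarz against the probability measure, and $\int_{\mathbb{T}^N}\eta\,d\mu^{\otimes N}=(p_1/2)^N$ give exactly $2^N\sqrt{2-2^{1-N}\RE(p_1^N)}$, and a unit point mass at $e^{i\alpha}$ turns every inequality into an equality, confirming sharpness. Your intermediate observation that $1+\sum_{n\ge1}2^{1-N}p_n^N z^n\in\mathcal{P}$ (the push-forward of $\mu^{\otimes N}$ under $(\zeta_1,\dots,\zeta_N)\mapsto\zeta_1\cdots\zeta_N$) is precisely the Komatu convolution fact the authors invoke to pass from Lemma \ref{llemma} to Lemma \ref{llemma2}, so your argument also makes explicit the reduction of the power case to $N=1$ in the same spirit the paper uses elsewhere; in effect you prove the general-$N$ statement rather than outsourcing it. One caveat concerns the quoted statement, not your proof: for the extremal function $p_1=2e^{i\alpha}$, attaining the bound is consistent with $\cos\alpha=\tfrac12\RE p_1=b$ under the normalization $\RE p_1=2b$, i.e.\ $\alpha=\cos^{-1}(b)$, whereas $\alpha=\cos^{-1}(b/2)$ would match the normalization $\RE p_1=b$; you rightly flag this as a parametrization mismatch to be checked against Brown's original paper, and it does not affect the validity of your argument.
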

\begin{lemma}\cite{Lecko}\label{llemma}
   Fix $\zeta \in \bar{\mathbb{D}}$. If $p(z) = 1+\sum_{n=1}^\infty p_n z^n \in \mathcal{P}$, then
\begin{align*}
    \lvert \xi p_{n+1} - p_n \rvert \leq
        \frac{2(1 - \lvert \xi \rvert^n)\left(1 + \lvert \xi \rvert^2 -  \RE (\xi p_1) \right)}{1 - \lvert \xi \rvert} + \lvert 2 - \xi p_1 \rvert \lvert \xi \rvert^n \;\; &\text{for} \;\; \lvert \xi \rvert <1.
\end{align*}
    The bounds are sharp for $p(z)=(1+z)/(1-z)$.
\end{lemma}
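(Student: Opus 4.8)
This is a statement about the Carath\'eodory class $\mathcal{P}$ alone (and is quoted here from \cite{Lecko}), so no feature of $\mathcal{A}_\beta$ enters; the plan is to obtain the estimate from the Herglotz representation together with one elementary polynomial identity. First I would write $p(z)=\int_{\partial\mathbb{D}}\frac{\eta+z}{\eta-z}\,d\mu(\eta)$ for a probability measure $\mu$ on $\partial\mathbb{D}$ and expand the kernel in powers of $z$, which gives $p_n=2\int_{\partial\mathbb{D}}\bar\eta^{\,n}\,d\mu(\eta)$ for $n\ge 1$; in particular $\int_{\partial\mathbb{D}}\bar\eta\,d\mu=p_1/2$, and for real $\xi$ one has $\int_{\partial\mathbb{D}}\lvert\eta-\xi\rvert^{2}\,d\mu=1+\lvert\xi\rvert^{2}-\RE(\xi p_1)$. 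It then follows that
\[
  \xi p_{n+1}-p_n \;=\; 2\int_{\partial\mathbb{D}}\bar\eta^{\,n}\big(\xi\bar\eta-1\big)\,d\mu(\eta).
\]

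The key step is to split $\bar\eta^{\,n}$ using the factorization $a^{n}-b^{n}=(a-b)\sum_{j=0}^{n-1}a^{\,n-1-j}b^{j}$ with $a=\bar\eta$ and $b=\xi$, that is, $\bar\eta^{\,n}=\xi^{n}+(\bar\eta-\xi)\sum_{j=0}^{n-1}\xi^{j}\bar\eta^{\,n-1-j}$, and to substitute this into the integral. The part coming from $\xi^{n}$ is a ``head'' $2\xi^{n}\int_{\partial\mathbb{D}}(\xi\bar\eta-1)\,d\mu=2\xi^{n}\big(\frac{\xi p_1}{2}-1\big)=-\xi^{n}(2-\xi p_1)$, of modulus exactly $\lvert 2-\xi p_1\rvert\,\lvert\xi\rvert^{n}$, and the remaining ``tail'' is $2\sum_{j=0}^{n-1}\xi^{j}\int_{\partial\mathbb{D}}(\bar\eta-\xi)(\xi\bar\eta-1)\,\bar\eta^{\,n-1-j}\,d\mu$. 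On $\partial\mathbb{D}$ one has $\lvert\bar\eta^{\,n-1-j}\rvert=1$ and $(\bar\eta-\xi)(\xi\bar\eta-1)=\bar\eta\,(2\xi\,\RE\eta-\xi^{2}-1)$, so for the real parameter $\xi\in[0,1)$---which is the range relevant to the applications below---$\lvert(\bar\eta-\xi)(\xi\bar\eta-1)\rvert=\lvert\eta-\xi\rvert^{2}$. Estimating the tail termwise by the triangle inequality and then inserting $\int_{\partial\mathbb{D}}\lvert\eta-\xi\rvert^{2}\,d\mu=1+\lvert\xi\rvert^{2}-\RE(\xi p_1)$ together with the geometric sum $\sum_{j=0}^{n-1}\lvert\xi\rvert^{j}=\frac{1-\lvert\xi\rvert^{n}}{1-\lvert\xi\rvert}$ bounds it by $\frac{2(1-\lvert\xi\rvert^{n})(1+\lvert\xi\rvert^{2}-\RE(\xi p_1))}{1-\lvert\xi\rvert}$. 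Adding the head and the tail gives the claimed inequality.

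For sharpness I would take $p(z)=(1+z)/(1-z)$, for which $\mu$ is the unit point mass at $\eta=1$ and $p_n=2$ for every $n$; then $\xi p_{n+1}-p_n=2(\xi-1)$, while for $\xi\in[0,1)$ the right-hand side simplifies to $2(1-\xi)=\lvert 2(\xi-1)\rvert$, so equality holds. The one delicate point in the whole argument is the identity $\lvert(\bar\eta-\xi)(\xi\bar\eta-1)\rvert=\lvert\eta-\xi\rvert^{2}$: it is exact for real $\xi$, but for a genuinely complex parameter it has to be replaced by a Cauchy--Schwarz and AM--GM estimate of $\int_{\partial\mathbb{D}}\lvert\eta-\bar\xi\rvert\,\lvert\eta-\xi\rvert\,d\mu$, and this is the only step that is not pure bookkeeping (and is presumably why one restricts to a real contraction parameter). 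Alternatively the estimate can be derived by induction on $n$ through a Schur-type reduction on $\mathcal{P}$, but the Herglotz route above is shorter and exhibits the two summands of the bound directly.
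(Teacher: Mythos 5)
The paper does not prove this lemma at all: it is quoted verbatim from the reference [Lecko], so there is no in-paper argument to compare against, and your Herglotz-based proof is a genuinely independent derivation. For \emph{real} $\xi$ with $\lvert\xi\rvert<1$ your argument is correct and complete: the expansion $p_n=2\int_{\partial\mathbb{D}}\bar\eta^{\,n}d\mu$, the head/tail split via $\bar\eta^{\,n}=\xi^{n}+(\bar\eta-\xi)\sum_{j=0}^{n-1}\xi^{j}\bar\eta^{\,n-1-j}$, the head value $-\xi^{n}(2-\xi p_1)$, the identity $\lvert(\bar\eta-\xi)(\xi\bar\eta-1)\rvert=\lvert\eta-\xi\rvert^{2}$ for real $\xi$, the moment identity $\int\lvert\eta-\xi\rvert^{2}d\mu=1+\lvert\xi\rvert^{2}-\RE(\xi p_1)$, and the equality check for $p(z)=(1+z)/(1-z)$ all verify. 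Since the only use of the lemma in this paper is through Lemma \ref{llemma2} with $\xi=\left((n-(n-1)\beta)/(n+1-n\beta)\right)^{N}\in(0,1)$, your proof fully covers what the paper actually needs.

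However, as a proof of the lemma \emph{as stated} (fixed $\xi\in\bar{\mathbb{D}}$, $\lvert\xi\rvert<1$, a priori complex) there is a gap, which you flag but do not close, and your suggested repair does not work as described. For complex $\xi$ one has $\lvert(\bar\eta-\xi)(\xi\bar\eta-1)\rvert=\lvert\eta-\bar\xi\rvert\,\lvert\eta-\xi\rvert$, and Cauchy--Schwarz bounds $\int\lvert\eta-\bar\xi\rvert\,\lvert\eta-\xi\rvert\,d\mu$ by $\bigl((1+\lvert\xi\rvert^{2}-\RE(\xi\bar p_1))(1+\lvert\xi\rvert^{2}-\RE(\xi p_1))\bigr)^{1/2}$, since $\int\lvert\eta-\bar\xi\rvert^{2}d\mu=1+\lvert\xi\rvert^{2}-\RE(\xi\bar p_1)$, which involves $\bar p_1$ rather than $p_1$ and need not be dominated by $1+\lvert\xi\rvert^{2}-\RE(\xi p_1)$; so the geometric mean can exceed the quantity appearing in the stated bound. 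Hence the complex case genuinely requires a different argument (Lecko's original one), and your write-up should either restrict the lemma to real $\xi$ (sufficient for this paper) or supply the missing step rather than gesture at Cauchy--Schwarz/AM--GM.
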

   According to Komatu \cite{Komatu}, if $p(z)=1+\sum_{n=1}^\infty p_n z^n$ and $q(z) = 1+ \sum_{n=1}^\infty q_n z^n$ both are the members of $\mathcal{P}$, then the weighted Hadamard product, $f * g $, also belongs to $\mathcal{P}$, where
    $$ f* g =1 + \sum_{n=1}^\infty \frac{p_n q_n}{2} z^n.$$
    Let us define  $F_j (z) = F_{j-1} * p(z)$ for $j \in \mathbb{N}$ with $F_0(z)= p(z)$, then using the above result, we have $F_j \in \mathcal{P}$. Particulary, for $N \in \mathbb{N}$, the function
    $$ F_{N-1} (z) = 1 + \sum_{n=1}^\infty \frac{p_{n}^N}{2^{N-1}} z^n  \in \mathcal{P}.$$
    Replacing $p(z)$ in Lemma \ref{llemma} by $F_{N-1}$, the result is as follows:
\begin{lemma}\label{llemma2} Fix $\xi \in \bar{\mathbb{D}}$ and $N\in \mathbb{N}$. If $p(z) = 1+ \sum_{n=1}^\infty p_n z^n$, then
\begin{align*}
   & \lvert \xi p_{n+1}^N - p_{n}^N \rvert \leq  \\
       & \frac{2 (1 - \lvert \xi \rvert^n)\left(2^{N-1} +2^{N-1} \lvert \xi \rvert^2 -  \RE (\xi p_{1}^N) \right)}{1 - \lvert \xi \rvert} + \lvert 2^{N} - \xi p_{1}^N \rvert \cdot \lvert \xi \rvert^n  &\text{for} \;\; \lvert \xi \rvert <1,
\end{align*}
    Equality holds for the function $(1+z)/(1-z)$.
\end{lemma}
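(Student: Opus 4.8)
The plan is to obtain this as an immediate rescaling of Lemma \ref{llemma}, using the observation recorded just above the statement that, by Komatu's theorem \cite{Komatu} on the weighted Hadamard product, the function
\[
   F_{N-1}(z) = 1 + \sum_{n=1}^{\infty} \frac{p_{n}^{N}}{2^{N-1}}\, z^{n}
\]
belongs to the Carath\'{e}odory class $\mathcal{P}$ whenever $p(z) = 1 + \sum_{n=1}^{\infty} p_n z^n \in \mathcal{P}$ (via the induction $F_{0} = p$, $F_{j} = F_{j-1} * p$). Hence no new extremal analysis is needed: the lemma is just the $N = 1$ case applied to $F_{N-1}$ and then cleared of denominators.

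Concretely, I would apply Lemma \ref{llemma} with $p$ replaced by $F_{N-1}$, whose $n$-th coefficient is $p_{n}^{N}/2^{N-1}$. This yields, for $\lvert \xi \rvert < 1$,
\[
   \left\lvert\, \xi\, \frac{p_{n+1}^{N}}{2^{N-1}} - \frac{p_{n}^{N}}{2^{N-1}} \right\rvert
   \leq \frac{2\,(1 - \lvert \xi \rvert^{n})\bigl(1 + \lvert \xi \rvert^{2} - \RE(\xi p_{1}^{N}/2^{N-1})\bigr)}{1 - \lvert \xi \rvert}
        + \bigl\lvert\, 2 - \xi p_{1}^{N}/2^{N-1} \bigr\rvert\, \lvert \xi \rvert^{n}.
\]
Multiplying both sides by $2^{N-1}$ and absorbing the factor termwise, using $2^{N-1}(1 + \lvert \xi \rvert^{2}) = 2^{N-1} + 2^{N-1}\lvert \xi \rvert^{2}$, $2^{N-1}\RE(\xi p_{1}^{N}/2^{N-1}) = \RE(\xi p_{1}^{N})$, and $2^{N-1}\lvert 2 - \xi p_{1}^{N}/2^{N-1}\rvert = \lvert 2^{N} - \xi p_{1}^{N}\rvert$, gives precisely the stated inequality.

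For the equality statement, I would take $p(z) = (1+z)/(1-z)$, for which $p_{n} = 2$ for all $n$, so that $p_{n}^{N}/2^{N-1} = 2$ and $F_{N-1}$ is again $(1+z)/(1-z)$ --- the extremal function in Lemma \ref{llemma} --- whence equality there transfers verbatim to the rescaled inequality. The only point requiring any care is the bookkeeping that $F_{N-1} \in \mathcal{P}$, but this has already been arranged in the discussion preceding the statement, so there is no genuine obstacle; the argument is a single substitution followed by elementary simplification.
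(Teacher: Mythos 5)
Your proposal is correct and is precisely the paper's own argument: the text immediately preceding the lemma constructs $F_{N-1}\in\mathcal{P}$ via Komatu's convolution result and obtains the statement by substituting $F_{N-1}$ into Lemma \ref{llemma}, exactly as you do, with the same rescaling by $2^{N-1}$ and the same extremal function $(1+z)/(1-z)$.
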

\begin{theorem}\label{thm7}
    If $f\in \mathcal{A}_\beta(p)$, then the following inequalities hold:
\begin{equation}\label{diff}
\begin{aligned}
     & \lvert  a_{n+1}^N -  a_n^N \rvert \leq \\
     &\left\{ \begin{array}{ll}
     \dfrac{2 (\sigma^n -\mu^n  ) ( 2^{N-1} \sigma^2 + 2^{N-1} \mu^2 - \sigma \mu p^N  )}{(\sigma - \mu) \sigma \mu^{n + 1}}  + \dfrac{\sigma^n \lvert 2^N \mu - \sigma p^N  \rvert}{ \sigma \mu^{n + 1}}; &\beta\in [0,1), \\ \\
      \dfrac{2^N \sqrt{2 - 2^{1-N}  p^N}}{\sigma}; & \beta=1,
     \end{array}
     \right.
\end{aligned}
\end{equation}
   where $\sigma =  (n - (n-1) \beta)^N$ and $\mu = (n + 1 - n \beta)^N$.  Bounds for $\beta \in [0,1)$ is sharp for $p=2$ whereas for $\beta =1$, bound is sharp for odd $N$ and $p=-2$.
\end{theorem}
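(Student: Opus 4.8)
The plan is to reduce both branches of (\ref{diff}) to the Carath\'{e}odory-class estimates in Lemmas \ref{brownl} and \ref{llemma2} via the coefficient identity (\ref{three}). Write $f\in\mathcal{A}_\beta(p)$ as $\beta f(z)/z+(1-\beta)f'(z)=p(z)$ with $p(z)=1+\sum_{k\ge1}p_kz^k\in\mathcal{P}$; then (\ref{three}) gives $a_k=p_{k-1}/(k-(k-1)\beta)$ for all $k\ge2$ and $p_1=(2-\beta)a_2=p$. Consequently, with $\sigma=(n-(n-1)\beta)^N$ and $\mu=(n+1-n\beta)^N$,
\[
a_{n+1}^N-a_n^N=\frac{p_n^N}{\mu}-\frac{p_{n-1}^N}{\sigma}=\frac1\sigma\left(\xi\,p_n^N-p_{n-1}^N\right),\qquad\xi:=\frac{\sigma}{\mu}=\left(\frac{n-(n-1)\beta}{n+1-n\beta}\right)^{N}.
\]
Since $(n+1-n\beta)-(n-(n-1)\beta)=1-\beta$, one has $0<\xi<1$ exactly when $\beta\in[0,1)$, whereas $\sigma=\mu=1$ and $\xi=1$ when $\beta=1$; this is precisely the case split in the statement.

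For $\beta\in[0,1)$ I would apply Lemma \ref{llemma2} to the pair $\left(p_n^N,p_{n-1}^N\right)$ with the real parameter $\xi$ above. Because $p_1=p$ is real, every quantity there becomes explicit: $\RE(\xi p_1^N)=\xi p^N$, and $\lvert 2^N-\xi p_1^N\rvert=2^N-\xi p^N$ (this is $2^N-\xi p^N\ge 2^N(1-\xi)>0$ when $p\ge0$, and $2^N+\xi\lvert p\rvert^N>0$ otherwise). Substituting $\xi=\sigma/\mu$, using $\tfrac{1-\xi^{m}}{1-\xi}=\tfrac{\mu^{m}-\sigma^{m}}{\mu^{m-1}(\mu-\sigma)}$, and clearing denominators turns Lemma \ref{llemma2} into the closed form in the first branch of (\ref{diff}), with the factors $(\sigma-\mu)$, $\sigma\mu^{n+1}$ and $\lvert 2^N\mu-\sigma p^N\rvert$. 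For the endpoint $\beta=1$ no rescaling is needed: $a_{n+1}^N-a_n^N=p_n^N-p_{n-1}^N$, and Lemma \ref{brownl} gives directly $\lvert p_n^N-p_{n-1}^N\rvert\le 2^N\sqrt{2-2^{1-N}\RE(p_1^N)}=2^N\sqrt{2-2^{1-N}p^N}$, which is the second branch since $\sigma=1$.

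Sharpness is checked on the extremals of the two lemmas. For $\beta\in[0,1)$ take $p=2$, i.e. $p(z)=(1+z)/(1-z)$, the equality case of Lemma \ref{llemma2}; then $p_k=2$ for all $k$, so $f=\tilde f$ as in (\ref{seven}), $a_k=2/(k-(k-1)\beta)$, hence $a_{n+1}^N-a_n^N=2^N(1/\mu-1/\sigma)$, and one verifies that the first branch of (\ref{diff}) collapses to $2^N(\mu-\sigma)/(\sigma\mu)$ at $p=2$ (using $2^{N-1}(\sigma^2+\mu^2)-\sigma\mu\,2^N=2^{N-1}(\sigma-\mu)^2$, after which the two summands telescope). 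For $\beta=1$ and odd $N$ take $p=-2$, i.e. $p(z)=(1-z)/(1+z)\in\mathcal{P}$, the equality case of Lemma \ref{brownl} for $p_1=-2$; then $p_k=2(-1)^k$, $a_k=2(-1)^{k-1}$, so $p_k^N=2^N(-1)^k$ and $\lvert a_{n+1}^N-a_n^N\rvert=2^{N+1}=2^N\sqrt{2-2^{1-N}(-2)^N}$, the corresponding $f(z)=z(1-z)/(1+z)$ lying in $\mathcal{A}_1(-2)$.

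The routine pieces are the two invocations of Lemmas \ref{brownl} and \ref{llemma2} and the two extremal computations. The point requiring care is the algebraic reduction for $\beta\in[0,1)$: confirming that Lemma \ref{llemma2}, which is phrased in $\xi$ and an index, rewrites exactly into the $\sigma,\mu$-form of (\ref{diff}), and that the modulus $\lvert 2^N-\xi p^N\rvert$ may be dropped for every admissible $p$ and every $N$ (equivalently that the implicit positivity $2^{N-1}(1+\xi^2)-\xi p^N>0$ used when invoking the lemma holds throughout).
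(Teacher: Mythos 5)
Your proposal follows the paper's proof essentially verbatim: the same coefficient identity $\sigma\,(a_{n+1}^N-a_n^N)=\xi p_n^N-p_{n-1}^N$ with $\xi=\sigma/\mu$, Lemma \ref{llemma2} for $\beta\in[0,1)$ and Lemma \ref{brownl} for $\beta=1$, and the same extremal functions ($\tilde f$ for $p=2$, and $z(1-z)/(1+z)$, i.e.\ the rotated $\tilde f$, for $\beta=1$, odd $N$, $p=-2$). The only caveat is that applying Lemma \ref{llemma2} literally to $\lvert \xi p_n^N-p_{n-1}^N\rvert$ produces the exponent $n-1$ where the displayed bound (\ref{diff}) carries $n$ --- an off-by-one bookkeeping that the paper's own proof glosses over in exactly the same way (both expressions coincide at the extremal value $p=2$), so this is not a gap relative to the paper.
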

\begin{proof}
   For $f\in \mathcal{A}_\beta(p)$, from (\ref{three}), we have
\begin{align*}
   (n- (n-1) \beta)^N \lvert a_{n+1}^N - a_{n}^N \rvert &= \bigg\lvert \bigg(\frac{n- (n-1) \beta}{(n+1)- n \beta} \bigg)^N p_n^N - {p_{n-1}^N} \bigg\rvert. \\
\end{align*}
   From Lemma \ref{llemma2} with $ \left((n- (n-1) \beta)/((n+1)- n \beta)\right)^N =: \xi$, bound in (\ref{diff}) for $\beta \in [0,1)$ follows since $\xi \in (0,1)$ whenever $\beta \in (0,1)$. For $\beta =1$ we have $\xi =1$. Bounds for $\beta =1$ are obtained using Lemma \ref{brownl} .

   To show the sharpness for $\beta \in [0,1)$, consider the function $\tilde{f}(z)$ given in (\ref{seven}). As for $\tilde{f}$, we have
   $$ \lvert a_{n+1} - a_{n} \rvert = \frac{2^{N}}{(n - (n - 1) \beta)^N} \left\lvert \frac{(n - (n - 1)\beta)^N}{(n +1 - n\beta)^N} -1  \right\rvert, $$
   which is same as in (\ref{diff}) for $p=2$. In case of $\beta =1$, for the function $\tilde{f}(-z)$, we have
   $$  \lvert a_{n+1} -a_n \rvert = \frac{2^{N+1}}{(n- (n-1) \beta)^N},$$
   which coincides with the bounds in (\ref{diff}) for odd $N$ and $p=-2$.
\end{proof}
    For $N=1$, Theorem \ref{thm7} yields the following bounds:
\begin{corollary}
    If $f\in \mathcal{A}_\beta(p)$ is of the form (\ref{zero}), then
\begin{align*}
     \lvert  a_{n+1} -  a_n \rvert \leq
     \left\{ \begin{array}{ll}
     \dfrac{2 (\sigma^n -\mu^n  ) ( \sigma^2 +  \mu^2 - \sigma \mu p  )}{(\sigma - \mu) \sigma \mu^{n + 1}}  + \dfrac{\sigma^n \lvert 2 \mu - \sigma p  \rvert}{ \sigma \mu^{n + 1}}; &\beta\in [0,1), \\ \\
      \dfrac{2 \sqrt{2 -   p}}{\sigma}; & \beta=1.
     \end{array}
     \right.
\end{align*}
\end{corollary}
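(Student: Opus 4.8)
The plan is to read off the corollary as the specialization $N=1$ of Theorem~\ref{thm7}. First I would record how the data of (\ref{diff}) collapse: since $\sigma=(n-(n-1)\beta)^N$ and $\mu=(n+1-n\beta)^N$, taking $N=1$ gives simply $\sigma=n-(n-1)\beta$ and $\mu=n+1-n\beta$, while $p^N=p$, $2^N=2$, $2^{N-1}=1$, and $2^{1-N}=1$. Substituting these into the two branches of (\ref{diff}) — the case $\beta\in[0,1)$ and the case $\beta=1$ — reproduces verbatim the two displayed bounds of the corollary, with the same case split on $\beta$.

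For completeness one can also give the argument directly, retracing the proof of Theorem~\ref{thm7}. From the coefficient relation (\ref{three}) one has
\[
(n-(n-1)\beta)\,\lvert a_{n+1}-a_n\rvert
=\Bigl\lvert \tfrac{n-(n-1)\beta}{\,n+1-n\beta\,}\,p_n-p_{n-1}\Bigr\rvert .
\]
When $\beta\in[0,1)$ the multiplier $\xi:=(n-(n-1)\beta)/(n+1-n\beta)$ lies in $(0,1)$, so Lemma~\ref{llemma} applies and, after dividing by $\sigma=n-(n-1)\beta$ and using $1-\xi=(\mu-\sigma)/\mu$ together with $\xi^n=\sigma^n/\mu^n$, yields the first bound. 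When $\beta=1$ we have $\xi=1$, and Lemma~\ref{brownl} with $N=1$ gives $\lvert p_n-p_{n-1}\rvert\le 2\sqrt{2-p_1}$, hence the second bound after dividing by $\sigma$.

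I do not anticipate a real obstacle: the statement is flagged as a corollary precisely because it is a substitution. The only point requiring care is the elementary bookkeeping when clearing denominators — confirming that the factor $\sigma^n\mu^{-(n+1)}$ and the denominator $(\sigma-\mu)$ in the corollary match what Lemma~\ref{llemma} produces once the common factor $\sigma^{-1}$ is pulled out and $1-\lvert\xi\rvert$ is rewritten in terms of $\sigma$ and $\mu$ (the orientation of the difference $\sigma-\mu$ versus $\mu-\sigma$ being harmless since it is paired with $\sigma^n-\mu^n$ of the matching sign). Sharpness, although not asserted in the corollary, is inherited from Theorem~\ref{thm7}: equality at $p=2$ for $\beta\in[0,1)$ via $\tilde f$ of (\ref{seven}), and at $p=-2$ for $\beta=1$ via $\tilde f(-z)$.
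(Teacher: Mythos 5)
Your proposal is correct and matches the paper exactly: the corollary is obtained by setting $N=1$ in Theorem~\ref{thm7}, so that $\sigma=n-(n-1)\beta$, $\mu=n+1-n\beta$, $p^N=p$, and the constants $2^{N-1}$, $2^N$, $2^{1-N}$ collapse to $1$, $2$, $1$, reproducing both branches of the stated bound. Your optional retracing of the Lemma~\ref{llemma}/Lemma~\ref{brownl} argument and the sharpness remark are consistent with the paper but not needed, since the paper offers no separate proof beyond the substitution.
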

    The class $\mathcal{A}_\beta$ reduces to the class $\mathcal{R}$ for $\beta =0$.  Let us take corresponding class $\mathcal{R}(p)= \{ f \in \mathcal{R} : f''(0)=p\}$. Theorem \ref{thm7} gives the following result for the class $\mathcal{R}(p)$ when $\beta =0$.
\begin{corollary}
   If $f\in \mathcal{R}(p)$ is of the form (\ref{zero}), then the following sharp bounds hold:
   $$  \lvert  a_{n+1}^N -  a_n^N \rvert \leq
     \frac{2 (\sigma^n -\mu^n  ) ( 2^{N-1} \sigma^2 + 2^{N-1} \mu^2 - \sigma \mu p^N  )}{(\sigma - \mu) \sigma \mu^{n + 1}}  + \frac{\sigma^n \lvert 2^N \mu - \sigma p^N  \rvert}{ \sigma \mu^{n + 1}}.$$
\end{corollary}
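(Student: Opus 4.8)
The statement is exactly the $\beta=0$ specialization of Theorem \ref{thm7}, so the plan is little more than a substitution together with a short remark on sharpness. First I would set $\beta=0$ in the two quantities that control Theorem \ref{thm7}: since $n-(n-1)\beta=n$ and $(n+1)-n\beta=n+1$, we get $\sigma=n^{N}$ and $\mu=(n+1)^{N}$, and because $0\in[0,1)$ only the first branch of (\ref{diff}) applies. Plugging these values of $\sigma$ and $\mu$ into that branch reproduces the displayed inequality verbatim, with no further estimation required. All the analytic content is already inside the proof of Theorem \ref{thm7}: the case $\beta=0$ of (\ref{three}) reads $n a_n=p_{n-1}$, hence $|a_{n+1}^{N}-a_{n}^{N}|=\sigma^{-1}\,|\xi p_{n}^{N}-p_{n-1}^{N}|$ with $\xi=\sigma/\mu=(n/(n+1))^{N}\in(0,1)$, and Lemma \ref{llemma2} (with its index shifted by one) delivers the bound.

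What deserves a separate word is sharpness, since Theorem \ref{thm7} records an extremal only for $p=2$ --- namely the hypergeometric function $\tilde f$ of (\ref{seven}), which at $\beta=0$ is simply the $f\in\mathcal{R}$ with $f'(z)=(1+z)/(1-z)$. For a general prescribed second coefficient $p\in[-2,2]$ one has to exhibit a member of $\mathcal{R}(p)$ attaining equality. I would take $f\in\mathcal{R}$ determined by $f'(z)=p_{0}(z)$, where $p_{0}\in\mathcal{P}$ is the Carath\'eodory function with first coefficient $p$ for which the estimate of Lemma \ref{llemma}, and hence --- after passing to the iterated Hadamard power $F_{N-1}$ --- that of Lemma \ref{llemma2}, becomes an equality; the natural candidate is $p_{0}(z)=(1-z^{2})/(1-pz+z^{2})$ up to rotation, which has first coefficient $p$ and degenerates to $(1+z)/(1-z)$ when $p=2$. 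Since $\RE f'=\RE p_{0}>0$ and $f''(0)=p$, this $f$ lies in $\mathcal{R}(p)$, and one then checks directly, via $a_n=p_{n-1}/n$, that $|a_{n+1}^{N}-a_{n}^{N}|$ equals the right-hand side.

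The main obstacle is precisely this verification: one must confirm that the function $F_{N-1}$ built from $p_{0}$ realizes the equality case of Lemma \ref{llemma} for the specific value $\xi=(n/(n+1))^{N}$, i.e. that the equality conditions there (phrased for $(1+z)/(1-z)$, where the first coefficient is $2$) persist for the shifted first coefficient $p$. For $p=2$ this is transparent and reduces to the computation already performed in Theorem \ref{thm7}; for the remaining values of $p$ it amounts to a routine but slightly delicate tracking of the extremal through the proofs of Lemmas \ref{llemma} and \ref{llemma2}. Once that is settled, the corollary is complete.
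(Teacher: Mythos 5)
Your derivation of the bound is exactly the paper's: the corollary is stated there with no separate proof, being the pure specialization $\beta=0$ of Theorem \ref{thm7}, i.e. $\sigma=n^{N}$, $\mu=(n+1)^{N}$ and the first branch of (\ref{diff}), precisely as you do. The only point of divergence is your sharpness discussion: the paper does not attempt to exhibit an extremal for every prescribed $p\in[-2,2]$; it simply inherits from Theorem \ref{thm7} the single extremal $\tilde f$ of (\ref{seven}) (at $\beta=0$, the function with $f'(z)=(1+z)/(1-z)$), which settles sharpness only for $p=2$. Your proposed candidate $p_{0}(z)=(1-z^{2})/(1-pz+z^{2})$ for general $p$ goes beyond the paper and is left unverified by you as well — note that equality in Lemma \ref{llemma2} (and in Lemma \ref{llemma}) is recorded only for the half-plane map, whose first coefficient is $2$, so it is not clear that the stated bound is attained within $\mathcal{R}(p)$ for $p\neq 2$; but this extra step is not needed to reproduce the paper's argument, whose sharpness claim should be read in the same restricted sense as in Theorem \ref{thm7}.
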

\section{Growth Theorem and Bohr Phenomenon}\label{Bohr}
\begin{theorem}\label{growth}
     If $f\in \mathcal{A}_\beta$ is of the form (\ref{zero}), then for $\lvert z \rvert \leq r$, the following hold:
\begin{enumerate}[(i)]
  \item $ -\dfrac{\tilde{f}(-r)}{r} \leq \RE \bigg(\dfrac{f(z)}{z} \bigg) \leq  \dfrac{\tilde{f}(r)}{r},$  
  \item $ -\tilde{f}(-r) \leq  \lvert f(z) \rvert \leq \tilde{f}(r)  , $\\
\end{enumerate}
    where $\tilde{f}(z)$ is given by (\ref{seven}). All these estimations are sharp.
\end{theorem}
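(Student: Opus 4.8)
The plan is to represent $f(z)/z$ as a generalized Herglotz integral of a Carath\'eodory function and then apply the classical two-sided bounds for the class $\mathcal{P}$ under the integral sign. Write $h(z)=f(z)/z=1+\sum_{n\ge2}a_nz^{n-1}$; then $f\in\mathcal{A}_\beta$ means precisely that
$$p(z):=\beta h(z)+(1-\beta)f'(z)=h(z)+(1-\beta)zh'(z)$$
belongs to $\mathcal{P}$, exactly as in the proof of Theorem~\ref{cbnd}. For $0\le\beta<1$ set $\gamma=1/(1-\beta)\ge1$. Solving the linear differential equation $\gamma h+zh'=\gamma p$ with the integrating factor $z^{\gamma-1}$ — or, just as quickly, comparing Taylor coefficients through the relation $a_n=p_{n-1}/(n-(n-1)\beta)$ of~(\ref{three}) — one obtains
$$\frac{f(z)}{z}=\gamma\int_0^1 s^{\gamma-1}\,p(sz)\,ds,\qquad z\in\mathbb{D}.$$
The case $\beta=1$ is immediate, since then $h=p$ and $\tilde f(z)/z$ collapses to $(1+z)/(1-z)$.

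Next I would identify the function $\tilde f$ of~(\ref{seven}) as exactly the image of the Herglotz kernel $p_0(z)=(1+z)/(1-z)$ under this representation. By Euler's integral formula ${}_2F_1[1,\gamma;\gamma+1;z]=\gamma\int_0^1 s^{\gamma-1}(1-sz)^{-1}\,ds$, and since $\gamma\int_0^1 s^{\gamma-1}\,ds=1$, a short rearrangement of~(\ref{seven}) gives
$$\frac{\tilde f(z)}{z}=\gamma\int_0^1 s^{\gamma-1}\,\frac{1+sz}{1-sz}\,ds .$$
As a consistency check, letting $r\to1^-$ in the estimates below recovers the inequality $\RE(f(z)/z)\ge\int_0^1\frac{1-t^{1-\beta}}{1+t^{1-\beta}}\,dt$ quoted from~\cite{FB1}.

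The theorem then follows from the pointwise Carath\'eodory bounds $\frac{1-s\rho}{1+s\rho}\le\RE p(sz)\le|p(sz)|\le\frac{1+s\rho}{1-s\rho}$, valid for $|z|=\rho$ and $s\in[0,1]$. Inserting these into the two integral formulas yields
$$\frac{-\tilde f(-\rho)}{\rho}\le\RE\frac{f(z)}{z}\le\left|\frac{f(z)}{z}\right|\le\frac{\tilde f(\rho)}{\rho},$$
and then (i) follows since $\rho\mapsto\tilde f(\rho)/\rho$ is increasing and $\rho\mapsto-\tilde f(-\rho)/\rho$ is decreasing on $(0,r]$, while (ii) follows on multiplying by $|z|$ (using that $\tilde f$ has nonnegative Taylor coefficients). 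Sharpness is witnessed by $\tilde f$, and more generally by the rotations $e^{-i\theta}\tilde f(e^{i\theta}z)$, evaluated at $z=\pm r$: there $p=p_0$ is real and positive on $(-1,1)$, so every inequality used becomes an equality.

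I do not expect a genuinely hard step here. The part requiring the most care is bringing the two representations — for $f(z)/z$ and for $\tilde f(z)/z$ — into the same integral shape; once Euler's integral for ${}_2F_1$ is invoked, the remainder is the standard growth-theorem scheme for subordination-type classes, together with the elementary monotonicity bookkeeping needed to phrase the conclusion in terms of $r$.
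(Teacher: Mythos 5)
Your proof is correct, and for the substantive parts it takes a different route from the paper. For part (i) the paper does not integrate Carath\'eodory bounds under the integral sign: it observes that $p(z)+(1-\beta)zp'(z)\prec(1+z)/(1-z)$ with $p=f(z)/z$ and invokes the Hallenbeck--Ruscheweyh differential-subordination theorem \cite[Theorem 3.1b]{MM} to conclude $f(z)/z\prec q(z)=\tilde f(z)/z$ with $q$ convex and the best dominant; the real-part bounds then come from the symmetry of the convex image domain, and sharpness of (i) is immediate from best dominance. Your argument reaches the same inequalities more elementarily, by deriving the Herglotz-type representation $f(z)/z=\gamma\int_0^1 s^{\gamma-1}p(sz)\,ds$ (which is exactly the paper's representation $f(z)=z\int_0^1 p(t^{1-\beta}z)\,dt$ from \cite[Lemma 4.10]{FB1} after the substitution $s=t^{1-\beta}$), matching it with Euler's integral for $\tilde f(z)/z$, and using the pointwise bounds for $\mathcal{P}$; what you lose is the extra information that $\tilde f(z)/z$ is a convex \emph{dominant} of $f(z)/z$, what you gain is independence from the subordination machinery and a single integral formula that drives both (i) and (ii). For part (ii) your lower bound $\lvert f(z)\rvert\ge\lvert z\rvert\,\RE\big(f(z)/z\big)\ge-\tilde f(-\rho)$ is actually cleaner than the paper's, which minimizes $g(\theta)=\lvert\tilde f(re^{i\theta})\rvert$ over $\theta$ and thus, as written, only analyzes the extremal function rather than a general $f\in\mathcal{A}_\beta$; your derivation covers the general case directly. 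One small caveat, which concerns the statement rather than your proof: the lower estimate in (ii) can only hold on $\lvert z\rvert=r$ (it fails at $z=0$ since $f(0)=0$ while $-\tilde f(-r)>0$), and both your argument and the paper's in fact establish it in that form, so your treatment is on par with the paper there; your monotonicity remarks correctly extend the remaining three estimates from $\lvert z\rvert=\rho$ to $\lvert z\rvert\le r$.
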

\begin{proof}
  (i) Let $f\in \mathcal{A}_\beta.$ Consider
   $p(z)={f(z)}/{z}$, then we have
   $$ \RE (p(z)+(1-\beta) z p'(z)) >0.$$
   It can be viewed as
   $ p(z)+ (1 - \beta) z p'(z) \prec {(1+z)}/{(1-z)}.$
   Further, by Hallenbeck and Rusheweyeh~\cite[Theorem 3.1b]{MM}, it follows that
  $$ p(z) \prec q(z) \prec  \frac{1+z}{1-z},$$
  where $q(z)$ is convex and best dominant,
   given by
   \begin{align*}
   q(z) &= \frac{1}{(1-\beta)z^{(\frac{1}{1-\beta})}} \int_0^z \left( \frac{1+t}{1-t} \right) t^{(\frac{1}{1-\beta}-1)} dt\\
      &= \frac{\tilde{f}(z)}{z},
    \end{align*}
   where $\tilde{f}(z)$ is defined in (\ref{seven}). Since $q(z)$ is convex and all coefficients are real for $\beta \in [0,1]$, therefore image domain of $\mathbb{D}$ under the function $q(z)$ is symmetric with respect to real axis and
   $$  q(-r) \leq  \RE (q(z)) \leq q(r), \quad \lvert z\rvert =r <1. $$
   As $p(z) =f(z)/z \prec q(z)$, so required bound of $\RE (f(z)/z)$ follows. This completes the first part. Sharpness of the bounds follow as $q(z)$ is the best dominant.
 (ii) From \cite[Lemma 4.10]{FB1}, $f \in \mathcal{A}_\beta$ if and only if
\begin{equation}\label{last}
      f(z) = z \int_0^1 p(t^{1-\beta} z) dt,
\end{equation}   where $p\in \mathcal{P}$.  
        Using the well known bound $ \lvert p(z) \rvert \leq (1+r)/(1-r)$ of Carath\'{e}odory functions, we have
\begin{equation*}
      \lvert f(z) \rvert \leq r \int_0^1 \frac{1 + r t^{1-\beta}}{1 - r t^{1-\beta}} dt =\tilde{f}(r) ,
\end{equation*}
     Now, we proceed for the lower bound of $\lvert f(z) \rvert$. After solving the integration in (\ref{last}) for $p(z)=(1+z)/(1-z)$, we get
       $$ f(z) = z ( -1 + 2 H(z)), $$
   where
   $$ H(z)=  {}_2 F_1 \bigg[1, \frac{1}{1-\beta}, \frac{2- \beta}{1-\beta},z \bigg].$$
   Thus for $z = r e^{i \theta}$,
\begin{equation}\label{nine}
    \lvert f(z)\rvert = \lvert z (-1 + 2 H(z))  \rvert  \geq  \min_{ \theta \in [0, 2 \pi]} g(\theta)
\end{equation}
   where
   $$ g(\theta) = \sqrt{  \RE( r e^{i \theta} ( -1 + 2 H( r e^{i \theta})))^2 + \IM (r e^{i \theta} ( -1 + 2 H(r e^{i \theta})))^2 }, $$
     Since for different choices of $\beta$ in $[0,1)$, $H(z)$ reduces to different functions. For instance, when $\beta =0$, it becomes $-2 \log(1-z)/z$ and for $\beta=1/2$, it reduces to $-4 (z + \log(1 - z))/z^2$. By a simple calculation, we find that the function $g(\theta)$ is decreasing from $[0,\pi]$ and increasing from $[\pi, 2 \pi]$ for $r\in (0,1)$ and $\beta \in [0,1)$. Hence $g(\theta)$ attains its minimum value at $\theta = \pi$. Thus from (\ref{nine}), we get
\begin{align*}
    \lvert f(z) \rvert &\geq \lvert -r (-1 +2 H(-r))\rvert \\
                & = r (-1 + 2 H(-r))= -r \tilde{f}(-r),
\end{align*}
    which completes the proof.
    Bounds are sharp for the function $\tilde{f}(z)$.
\end{proof}
\begin{theorem}\label{BohrT}
    If $f\in \mathcal{A}_\beta$ is of the form (\ref{zero}), then for $m\in \mathbb{N} $
    $$ \lvert \omega(z^m) \rvert + \sum_{n=2}^\infty \lvert a_n z^n \rvert \leq d(0, \partial f(\mathbb{D}))$$
    in $\lvert z \rvert \leq r^*$, where $r^*$ is the smallest positive root of
\begin{equation}\label{bohr}
    r^m + \tilde{f}(r) -r + \tilde{f}(-1)=0.
\end{equation}
   The radius $r^*$ is sharp.
\end{theorem}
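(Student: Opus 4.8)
The plan is to follow the standard template for Bohr-type theorems: bound the left-hand side of the inequality by a function of $r=|z|$ using the growth and coefficient estimates already established, bound the distance $d(0,\partial f(\mathbb{D}))$ from below by a constant, and then compare the two to extract the radius. First I would observe that $\omega$ here must denote the self-map $f(z)/z$ composed appropriately, or more plausibly that $\lvert\omega(z^m)\rvert$ is really $\lvert f(z^m)\rvert$ in the notation of Definition~1.4; in either reading, Theorem~\ref{growth}(ii) gives $\lvert f(z^m)\rvert \le \tilde f(r^m)$, and since $\tilde f$ has nonnegative Taylor coefficients, $\tilde f(r^m)\le\tilde f(r)$ — but to get the clean root equation \eqref{bohr} I expect the intended bound is the cruder $\lvert f(z^m)\rvert\le r^m$ coming from $|f(z^m)|\le |z^m|\cdot\sup|f(w)/w|$ only when that sup is $1$; more likely one simply uses $\lvert\omega(z^m)\rvert\le r^m$ treating $\omega\in\mathcal B$. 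So the first key step is: $\lvert\omega(z^m)\rvert\le r^m$.

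The second step handles the tail sum. By Theorem~\ref{cbnd}, $\lvert a_n\rvert\le \frac{2}{n-\beta(n-1)}$, which are exactly the Taylor coefficients of $\tilde f$ in \eqref{seven}. Hence
\[
\sum_{n=2}^\infty \lvert a_n\rvert\, r^n \;\le\; \sum_{n=2}^\infty \frac{2}{n-\beta(n-1)}\, r^n \;=\; \tilde f(r) - r ,
\]
using the series expansion of $\tilde f$. Combining with the first step, the left-hand side of the asserted inequality is at most $r^m + \tilde f(r) - r$.

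The third step is the lower bound for the distance. Since $f(0)=0$, $d(0,\partial f(\mathbb{D})) = \liminf_{\lvert z\rvert\to 1^-}\lvert f(z)\rvert$, and by the sharp lower growth estimate in Theorem~\ref{growth}(ii), $\lvert f(z)\rvert \ge -\tilde f(-\lvert z\rvert)$ for all $z$; letting $\lvert z\rvert\to1$ and using that $\tilde f(-r)$ is increasing in $r$ toward $\tilde f(-1)$ (which is finite — this needs a brief check that the hypergeometric series converges at $-1$, i.e. that $-\tilde f(-1)=\int_0^1\frac{1-t^{1-\beta}}{1+t^{1-\beta}}\,dt$, a convergent integral), we get $d(0,\partial f(\mathbb{D})) \ge -\tilde f(-1)$. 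Putting the three steps together, the Bohr inequality holds as soon as
\[
r^m + \tilde f(r) - r + \tilde f(-1) \;\le\; 0 ,
\]
and the left side is a continuous function of $r$ on $[0,1)$ that is negative at $r=0$ (equal to $\tilde f(-1)<0$) and, being the sum of increasing functions $r^m$ and $\tilde f(r)-r$, is strictly increasing; hence it has a unique root $r^*$, and the inequality holds precisely for $r\le r^*$. This establishes the radius; the root equation is exactly \eqref{bohr}.

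For sharpness, I would exhibit the extremal function — the natural candidate is $\tilde f$ itself (or a rotation thereof), for which $a_n = \frac{2}{n-\beta(n-1)}>0$ so that $\sum\lvert a_n\rvert r^n$ achieves $\tilde f(r)-r$ with equality, and for which $d(0,\partial\tilde f(\mathbb{D})) = -\tilde f(-1)$ by Theorem~\ref{growth}(ii); one then checks that at $r=r^*$ the inequality becomes an equality and fails for $r>r^*$, possibly after also accounting for the $\lvert\omega(z^m)\rvert$ term by taking $\omega(z)=z$ or an appropriate Blaschke-type factor. The main obstacle I anticipate is not any single estimate — each is a direct consequence of results already proved — but rather pinning down the precise meaning of $\omega$ and the term $\lvert\omega(z^m)\rvert$ in the statement, and verifying that the crude bound used there ($r^m$) is simultaneously compatible with the claimed sharpness; reconciling the definition (Definition~1.4 uses $\lvert f(z^m)\rvert$, which would give $\tilde f(r^m)$ rather than $r^m$ and hence a different root equation) is the delicate point that the proof must address carefully.
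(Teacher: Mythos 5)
Your proposal follows essentially the same route as the paper: bound the tail by $\tilde f(r)-r$ via Theorem \ref{cbnd}, bound the distance below by $-\tilde f(-1)$ via Theorem \ref{growth}, use $\lvert\omega(z^m)\rvert\le r^m$, and obtain $r^*$ as the root of \eqref{bohr} with sharpness from $\tilde f$ and $\omega(z)=z$. Your chosen reading of $\omega$ is exactly the paper's (the $\lvert f(z^m)\rvert$ variant is handled in the subsequent Bohr--Rogosinski theorem), and your monotonicity argument for the existence and uniqueness of $r^*$ is if anything slightly cleaner than the paper's intermediate value argument.
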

\begin{proof}
   Let $f\in \mathcal{A}_\beta$, then by Theorem \ref{growth}, the Euclidean distance between $f(0)=0$ and the boundary of $f(\mathbb{D})$ satisfies
   $$ d(0, \partial f(\mathbb{D})) \geq \lim_{ r \rightarrow 1} \lvert f(z) \rvert = - \tilde{f}(-1).  $$
   Let $\lvert z \rvert \leq r$. Now using (\ref{two}) with the above inequality, we have
\begin{align*}
    \lvert \omega(z^m) \rvert + \sum_{n=2}^\infty \lvert a_n z^n \rvert  &\leq  r^m + \sum_{n=2}^\infty \left( \frac{2}{n-\beta (n-1)} \right) r^n, \\
                                                               & = r^m + \tilde{f}(r) -r    \\
                                                                &\leq - \tilde{f}(-1) \leq d(0, \partial f(\mathbb{D})).
\end{align*}
    which is true in $\lvert z \rvert =r \leq r^*$, where $r^*$ is the root of $H(r)=r( r^{m-1} -1) + \tilde{f}(r) + \tilde{f}(-1)$. Note that, $H(0) = \tilde{f}(-1) < 0$ and $H(1)= \tilde{f}(1)+ \tilde{f}(-1)>0 $ for all $\beta \in [0,1]$, therefore by the Intermediate value property for continuous functions there must exist a $r^* \in (0,1)$ such that $H(r^*)=0$.

    Sharpness holds for the functions $\tilde{f}(z)$ and $\omega(z) = z$. Since at $z =r^*$,
\begin{align*}
     \lvert \omega(z^m) \rvert + \sum_{n=2}^\infty \lvert a_n z^n \rvert  &= (r^*)^m + \sum_{n=2}^\infty \frac{2}{n- (n-1)\beta}(r^*)^n   \\
                                                                         &= (r^*)^m + \tilde{f}(r^*)-r^* = -\tilde{f} (-1).
\end{align*}
   Hence the radius is sharp.
\end{proof}
    For $w(z)=z$ and $m=1$, Theorem \ref{BohrT} gives the following Bohr-radius for the class $\mathcal{A}_\beta.$
\begin{corollary} If $f\in \mathcal{A}_\beta$, then
       $ \lvert z \rvert + \sum_{n=2}^\infty \lvert a_n z^n \rvert \leq d(0, \partial f(\mathbb{D}))$
    in $\lvert z \rvert \leq r_b$, where $r_b$ is root of
    $ \tilde{f}(r) + \tilde{f} (-1)=0.$
   The radius $r_b$ is sharp.
\end{corollary}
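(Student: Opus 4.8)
This corollary is exactly the case $\omega(z)=z$, $m=1$ of Theorem~\ref{BohrT}, so the plan is to specialise that theorem and verify that its hypotheses and its defining equation degenerate to the ones stated here. First I would put $\omega(z)=z$ and $m=1$: then $\lvert\omega(z^m)\rvert=\lvert z\rvert$, so the left-hand side $\lvert z\rvert+\sum_{n\ge2}\lvert a_nz^n\rvert$ of the claimed inequality is precisely the quantity bounded in Theorem~\ref{BohrT}, while the right-hand side $d(0,\partial f(\mathbb{D}))$ is unchanged. Next I would substitute $m=1$ into the defining equation~(\ref{bohr}): the term $r^m$ cancels the $-r$, and $r^1+\tilde{f}(r)-r+\tilde{f}(-1)=0$ collapses to $\tilde{f}(r)+\tilde{f}(-1)=0$, which is exactly the equation said to determine $r_b$. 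Thus the inequality on $\lvert z\rvert\le r^*$ furnished by Theorem~\ref{BohrT} becomes the inequality on $\lvert z\rvert\le r_b$ asserted here, with $r_b=r^*$.

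It then remains only to note that $r_b$ is a well-defined point of $(0,1)$, and this is the same Intermediate-Value argument already run inside the proof of Theorem~\ref{BohrT}: writing $\Phi(r):=\tilde{f}(r)+\tilde{f}(-1)$, one has $\Phi(0)=\tilde{f}(-1)<0$ (because, by Theorem~\ref{growth}(ii), $-\tilde{f}(-1)$ is a positive lower bound for $d(0,\partial f(\mathbb{D}))$), whereas $\tilde{f}(r)\to+\infty$ as $r\to1^-$ (the coefficients $2/(n-(n-1)\beta)$ in~(\ref{seven}) are of order $1/n$ when $\beta\in[0,1)$, so the series diverges at $r=1$, and for $\beta=1$ one has $\tilde{f}(r)=r+2r^2/(1-r)\to\infty$ as well), so $\Phi(r)\to+\infty$. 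Continuity of $\Phi$ on $(0,1)$ and the intermediate value theorem produce a zero, and $r_b$ is taken to be the smallest such zero.

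Finally, sharpness is inherited verbatim: choosing $f=\tilde{f}$ from~(\ref{seven}) together with $\omega(z)=z$ turns the chain of estimates in the proof of Theorem~\ref{BohrT} into a chain of equalities at $\lvert z\rvert=r_b$, so the radius cannot be enlarged. Since the statement is a pure specialisation, I do not expect any genuine obstacle; the analytic substance — the growth estimate and the resulting lower bound $-\tilde{f}(-1)\le d(0,\partial f(\mathbb{D}))$ from Theorem~\ref{growth}, together with the coefficient bound~(\ref{two}) from Theorem~\ref{cbnd} — is already in place, and the only things to be careful about are the bookkeeping of the $m=1$ substitution and the sign of $\tilde{f}(-1)$.
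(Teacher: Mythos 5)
Your proposal is correct and is exactly how the paper obtains this corollary: it is stated as the specialisation $\omega(z)=z$, $m=1$ of Theorem~\ref{BohrT}, under which the equation $r^m+\tilde{f}(r)-r+\tilde{f}(-1)=0$ collapses to $\tilde{f}(r)+\tilde{f}(-1)=0$. Your added remarks on the intermediate value argument (using $\tilde{f}(-1)<0$) and on sharpness via $\tilde{f}$ merely restate what is already inside the proof of Theorem~\ref{BohrT}, so there is nothing further to check.
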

   For various values of $\beta \in [0,1]$, the root $r_b$ is shown in Figure 1 and Table 1. \\

\begin{figure}[h]
\centering
\captionsetup{justification=centering,margin=2cm}

  \includegraphics[width=8cm, height=4cm]{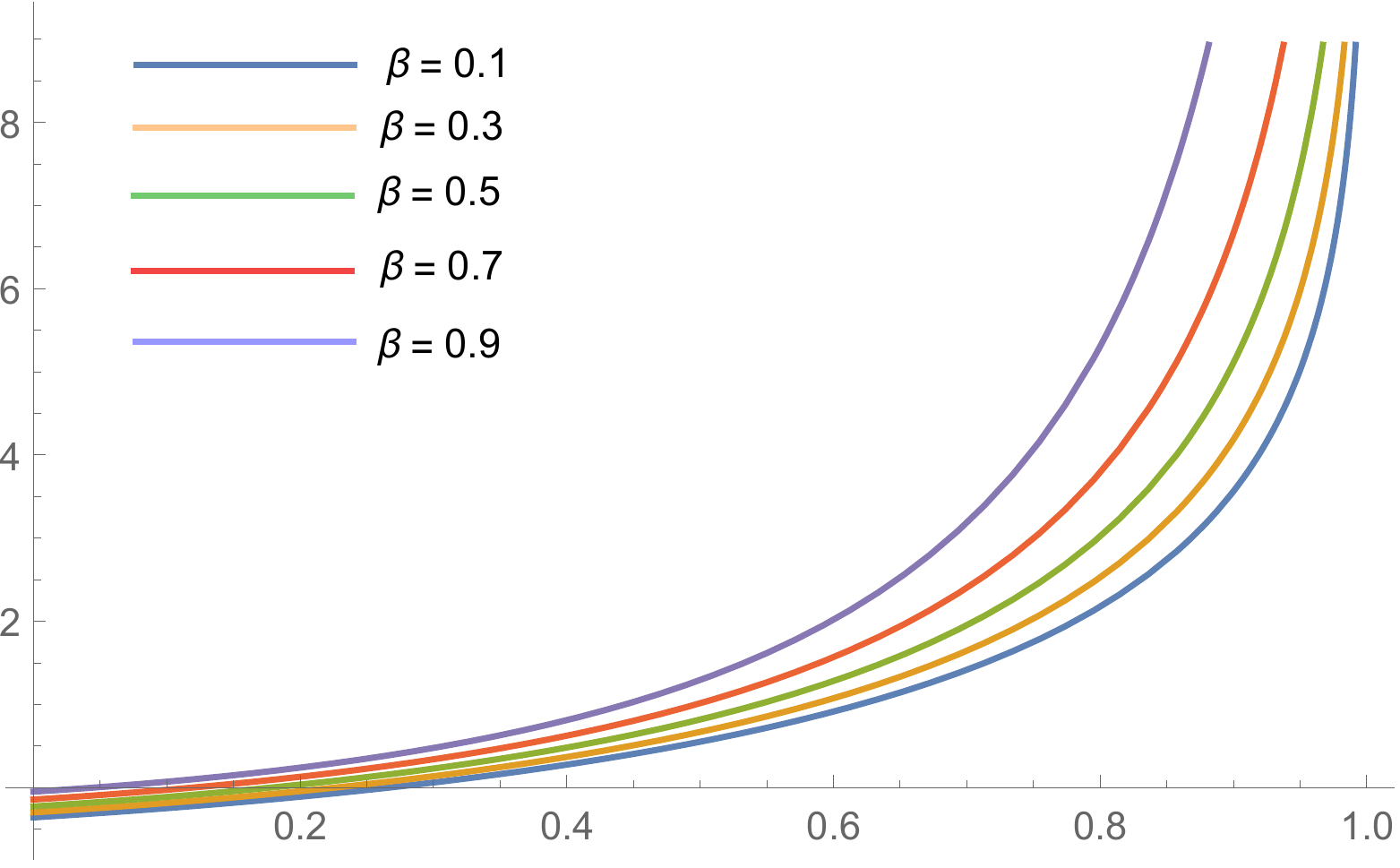}
 \caption{}
 \label{fig:mesh1}
\end{figure}

\begin{table}[h]
\begin{center}
\caption{Radius $r^*$ for various choices of $\beta$}\label{tab1}%
\begin{tabular}{@{}cccccccc@{}}
\toprule
$\beta$  & 0.1 & 0.2 & 0.3 & 0.5 & 0.7 & 0.8  &0.9   \\
\midrule
  $r_b$  & 0.267139 & 0.24766  &  0.22655   &  0.178366  & 0.119726 &  0.085113 & 0.0457777 \\ 
\toprule
\end{tabular}
\end{center}
\end{table}
\begin{theorem}
    If $f\in \mathcal{A}_\beta$, then
\begin{equation}\label{bohr-rogo}
     \lvert f(z^m) \rvert+ \sum_{k=N}^\infty \lvert a_k z^k \rvert \leq d(0, \partial f(\mathbb{D}))
\end{equation}
    hold for $ \lvert z \rvert = r \leq  r_N$, where $r_N$ is the root of the equation
    $$ \tilde{f}(r^m) + \tilde{f} (r) - \hat{f}(r) + \tilde{f}(-1) =0, $$
    with
\begin{align*}
     \hat{f}(r) = \left\{
    \begin{array}{ll}
    0 & N=1, \\
    r & N=2, \\
    r+ \sum_{n=2}^{N-1} \frac{2}{n-(n-1)\beta} r^n & N\geq 3.
    \end{array}
    \right.
\end{align*}
    The radius is sharp.
\end{theorem}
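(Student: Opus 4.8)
The strategy mirrors exactly the argument used for Theorem \ref{BohrT}, with the single modification that the first $N-1$ terms of the Taylor series are now split off and carried on the left-hand side of the estimate rather than included in the tail. First I would invoke Theorem \ref{growth}(ii) to get the growth bound $\lvert f(z^m)\rvert \le \tilde f(r^m)$ for $\lvert z\rvert = r$, together with the coefficient bound $\lvert a_n\rvert \le 2/(n-(n-1)\beta)$ from Theorem \ref{cbnd}. Writing $\lvert z\rvert = r$, one then estimates
\begin{align*}
   \lvert f(z^m)\rvert + \sum_{k=N}^\infty \lvert a_k z^k\rvert
      &\le \tilde f(r^m) + \sum_{k=N}^\infty \frac{2}{k-(k-1)\beta}\, r^k \\
      &= \tilde f(r^m) + \Bigl(\tilde f(r) - r - \sum_{n=2}^{N-1}\frac{2}{n-(n-1)\beta}\, r^n\Bigr) \\
      &= \tilde f(r^m) + \tilde f(r) - \hat f(r),
\end{align*}
where the middle step just adds and subtracts the terms $n=1,\dots,N-1$ of the series $\tilde f(r) = r + \sum_{n\ge 2}\frac{2}{n-(n-1)\beta}r^n$, and the three cases of $\hat f$ in the statement are precisely the values $0$, $r$, and $r+\sum_{n=2}^{N-1}\frac{2}{n-(n-1)\beta}r^n$ of that partial sum for $N=1$, $N=2$, $N\ge 3$.

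Next I would recall, again from Theorem \ref{growth}(ii), that the Euclidean distance from $f(0)=0$ to $\partial f(\mathbb D)$ satisfies $d(0,\partial f(\mathbb D)) \ge \lim_{r\to 1^-}\lvert f(z)\rvert \ge -\tilde f(-1)$ (note $\tilde f(-1)<0$ since all coefficients of $\tilde f$ are positive). Hence the desired inequality \eqref{bohr-rogo} holds as soon as
$$
   \tilde f(r^m) + \tilde f(r) - \hat f(r) \le -\tilde f(-1),
$$
i.e. as soon as $\Phi(r) := \tilde f(r^m) + \tilde f(r) - \hat f(r) + \tilde f(-1) \le 0$. I would then check that $\Phi$ is continuous on $[0,1]$ with $\Phi(0) = \tilde f(-1) < 0$ and $\Phi(1) = \tilde f(1) + \tilde f(1) - \hat f(1) + \tilde f(-1)$; since $\hat f(1) \le \tilde f(1)$ (it is a truncation of the positive series $\tilde f(1)$) and $\tilde f(1) + \tilde f(-1) > 0$ for all $\beta\in[0,1]$ (this last inequality is what was already used in Theorem \ref{BohrT}, the value $\tilde f(1)$ being interpreted as the limit $\lim_{r\to 1^-}\tilde f(r)$ — note $\hat f$ has only finitely many terms, so its value at $1$ is finite and strictly less than the divergent-looking but actually convergent $\tilde f(1)$... one must be slightly careful here), we get $\Phi(1) > 0$. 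By the intermediate value theorem there is a smallest root $r_N\in(0,1)$, and $\Phi(r)\le 0$ on $[0,r_N]$ by minimality (together with $\Phi(0)<0$ and continuity, after checking $\Phi$ does not dip back below $0$ — or simply defining $r_N$ as the first root, which suffices for the stated conclusion on $[0,r_N]$).

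Finally, sharpness: I would exhibit the extremal pair $f = \tilde f$ (defined in \eqref{seven}) — which is admissible by Theorem \ref{cbnd} — and show that at $z = r_N$ all the triangle-inequality and growth estimates above become equalities, so the left-hand side of \eqref{bohr-rogo} equals $\tilde f(r_N^m) + \tilde f(r_N) - \hat f(r_N) = -\tilde f(-1) = d(0,\partial \tilde f(\mathbb D))$, the last equality holding because for $\tilde f$ the growth bound $-\tilde f(-r)\le\lvert\tilde f(z)\rvert$ is attained along the negative real axis and $\lim_{r\to1^-}(-\tilde f(-r)) = -\tilde f(-1)$ is exactly the boundary distance. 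The only genuinely delicate point is the behaviour of $\tilde f$ near $r=1$: for $\beta\in[0,1)$ the hypergeometric series $\tilde f(r) = r(-1+2\,{}_2F_1[1,\tfrac1{1-\beta},\tfrac{2-\beta}{1-\beta},r])$ actually converges as $r\to1^-$ (the parameter inequality $\tfrac{2-\beta}{1-\beta} - 1 - \tfrac1{1-\beta} = 0$ puts it right at the boundary of Gauss's convergence criterion, so one should instead argue via the integral representation $\tilde f(r) = r\int_0^1\frac{1+rt^{1-\beta}}{1-rt^{1-\beta}}\,dt$ and dominated/monotone convergence, as was implicitly done for Theorem \ref{BohrT}), and this finiteness of $\tilde f(1)$ — equivalently, the finiteness of the boundary distance — is what makes the whole Bohr–Rogosinski radius well-defined; I would state this convergence carefully and otherwise reuse the machinery already in place.
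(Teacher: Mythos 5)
Your proposal follows essentially the same route as the paper's proof: bound $\lvert f(z^m)\rvert$ by $\tilde{f}(r^m)$ via Theorem \ref{growth}, bound the tail $\sum_{k\geq N}\lvert a_k z^k\rvert$ by the coefficient estimates of Theorem \ref{cbnd} so that it equals $\tilde{f}(r)-\hat{f}(r)$, compare with $d(0,\partial f(\mathbb{D}))\geq -\tilde{f}(-1)$, obtain $r_N$ by the intermediate value theorem applied to $\Phi(r)=\tilde{f}(r^m)+\tilde{f}(r)-\hat{f}(r)+\tilde{f}(-1)$, and verify sharpness with the extremal function $\tilde{f}$; this is exactly the paper's argument. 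One correction to your ``delicate point'': the claim that $\tilde{f}(r)$ converges as $r\to 1^-$ is false. The Taylor coefficients of $\tilde{f}$ are $2/(n-(n-1)\beta)\sim 2/((1-\beta)n)$ for $\beta\in[0,1)$ (and equal to $2$ when $\beta=1$), so $\tilde{f}(r)\to+\infty$ as $r\to 1^-$; equivalently, the integrand $(1+rt^{1-\beta})/(1-rt^{1-\beta})$ develops a nonintegrable singularity at $t=1$ in the limit. The quantity that must be (and is) finite is $\tilde{f}(-1)$, which converges as an alternating series (or via the integral $\int_0^1\frac{1-t^{1-\beta}}{1+t^{1-\beta}}\,dt$), and this alone makes the lower bound on the boundary distance, and hence the radius, well defined. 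The divergence of $\tilde{f}$ at $1$ only forces $\Phi(r)\to+\infty$ as $r\to 1^-$, which strengthens rather than threatens the intermediate value step; this is implicitly how the paper reads $G(1)>0$ in Theorem \ref{BohrT} and here. With that remark repaired, your argument matches the published proof.
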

\begin{proof}
   Suppose $f \in \mathcal{A}_\beta$, then from (\ref{one}) and  Theorem \ref{growth}, we have
\begin{align*}
    \lvert f(z^m)\rvert + \sum_{k=N}^\infty \lvert a_k z^k \rvert &\leq \tilde{f}(r^m)  + \sum_{n=N}^\infty \frac{2}{n - (n-1) \beta} r^n \\
                                                            & = \tilde{f}(r^m) - \hat{f}(r) + \tilde{f}(r)\\
                                                            & \leq - \tilde{f}(-1)\\
                                                            &\leq d(0, \partial f(\mathbb{D}))
\end{align*}
    holds in $\lvert z \rvert = r_N$, where $r_N$ is the root of
    $$ G(r):= \tilde{f}(r^m) - \hat{f}(r) + \tilde{f}(r) + \tilde{f}(-1)=0.$$
    Since $G(0)=\tilde{f}(-1)<0 $ and $G(1)= (\tilde{f}(1) - \hat{f}(1))+(\tilde{f}(1) + \tilde{f} (-1))>0$, therefore there exist a $r_N \in (0,1)$ such that (\ref{bohr-rogo}) holds. Note that, for the function $\tilde{f}(z)$ at $\lvert z \rvert =r_N $,
\begin{align*}
    \lvert f(z^m) \rvert + \sum_{k=N}^\infty \lvert a_k z^k \rvert &= \tilde{f} ((r_N )^m)  + \sum_{n=N}^\infty \frac{2}{n - (n-1) \beta} (r_N )^n \\
                                                                   &= -\tilde{f}(-1),
\end{align*}
    which proves the sharpness of radius.
\end{proof}
\section{Declarations}
\subsection*{Funding}
The work of the Surya Giri is supported by University Grant Commission, New-Delhi, India  under UGC-Ref. No. 1112/(CSIR-UGC NET JUNE 2019).
\subsection*{Conflict of interest}
	The authors declare that they have no conflict of interest.
\subsection*{Author Contribution}
    Each author contributed equally to the research and preparation of manuscript.
\subsection*{Data Availability} Not Applicable.
\noindent


\begin{thebibliography}{99}


\bibitem{Allu} V. Allu, A. Lecko\ and\ D. K. Thomas, Hankel, Toeplitz, and Hermitian-Toeplitz determinants for certain close-to-convex functions, Mediterr. J. Math. {\bf 19} (2022), no.~1, Paper No. 22, 17 pp.


\bibitem{vasu} M. F. Ali, D. K. Thomas\ and\ A. Vasudevarao, Toeplitz determinants whose elements are the coefficients of analytic and univalent functions, Bull. Aust. Math. Soc. {\bf 97} (2018), no.~2, 253--264.

\bibitem{BP} E. Berkson\ and\ H. Porta, Semigroups of analytic functions and composition operators, Michigan Math. J. {\bf 25} (1978), no.~1, 101--115.

\bibitem{Ben} C. B\'{e}n\'{e}teau, A. Dahlner\ and\ D. Khavinson, Remarks on the Bohr phenomenon, Comput. Methods Funct. Theory {\bf 4} (2004), no.~1, 1--19.


\bibitem{Bohr} H. Bohr, A Theorem Concerning Power Series, Proc. London Math. Soc. (2) {\bf 13} (1914), 1--5.

\bibitem{FB} F. Bracci, M. D. Contreras\ and\ S. D\'{\i}az-Madrigal, Pluripotential theory, semigroups and boundary behavior of infinitesimal generators in strongly convex domains, J. Eur. Math. Soc. (JEMS) {\bf 12} (2010), no.~1, 23--53.

\bibitem{FB1} F. Bracci,  M. D. Contreras, S. D\'{\i}az-Madrigal, M. Elin\ and\ D. Shoikhet, Filtrations of infinitesimal generators, Funct. Approx. Comment. Math. {\bf 59} (2018), no.~1, 99--115.

\bibitem{FBL} F. Bracci, M. D. Contreras\ and\ S. D\'{\i}az-Madrigal, Evolution families and the Loewner equation I: the unit disc, J. Reine Angew. Math. {\bf 672} (2012), 1--37.

\bibitem{Brown} J. E. Brown, Successive coefficients of functions with positive real part, Int. J. Math. Anal. (Ruse) {\bf 4} (2010), no.~49-52, 2491--2499.

\bibitem{ChoZ} N. E. Cho, O. S. Kwon, A. Lecko and Y. J. Sim, Sharp estimates of generalized Zalcman functional of early coefficients for Ma-Minda type functions, Filomat {\bf 32} (2018), no.~18, 6267--6280.

\bibitem{Cudna} K. Cudna, O. S. Kwon, A. Lecko, Y. J. Sim and B.  \'{S}miarowska, The second and third-order Hermitian Toeplitz determinants for starlike and convex functions of order $\alpha$, Bol. Soc. Mat. Mex. (3) {\bf 26} (2020), no.~2, 361--375.

\bibitem{Duren} P. L. Duren, {\it Univalent functions}, Grundlehren der mathematischen Wissenschaften, 259, Springer-Verlag, New York, 1983.

\bibitem{Ellin2} M. Elin\ and\ D. Shoikhet, {\it Linearization models for complex dynamical systems}, Operator Theory: Advances and Applications, 208, Birkh\"{a}user Verlag, Basel, 2010.

\bibitem{Elin} M. Elin, D. Shoikhet\ and\ N. Tuneski, Radii problems for starlike functions and semigroup generators, Comput. Methods Funct. Theory {\bf 20} (2020), no.~2, 297--318.

\bibitem{ganganina} K. Gangania and S. S. Kumar, Bohr Radius for Some Classes of Harmonic Mappings, Iranian Journal of Science and Technology, Transactions A: Science (2022): 1-8.

\bibitem{Surya3} S. Giri and S. S. Kumar, Radius and Convolution problems of analytic functions involving Semigroup Generators, arXiv:2205.10777 (2022).


\bibitem{Kamal2} K. Gangania\ and\ S. S. Kumar, Bohr-Rogosinski Phenomenon for $\mathcal{S}^*(\psi)$ and $\mathcal{C}(\psi)$, Mediterr. J. Math. {\bf 19} (2022), no.~4, Paper No. 161.

\bibitem{Kayumov} I. R. Kayumov, D. M. Khammatova\ and\ S. Ponnusamy, Bohr-Rogosinski phenomenon for analytic functions and Ces\'{a}ro operators, J. Math. Anal. Appl. {\bf 496} (2021), no.~2, Paper No. 124824, 17 pp

\bibitem{Komatu} Y. Komatu, On convolution of power series, Kodai Math. Sem. Rep. {\bf 10} (1958), 141--144.

\bibitem{VkumarT} V. Kumar\ and\ N. E. Cho, Hermitian-Toeplitz determinants for functions with bounded turning, Turkish J. Math. {\bf 45} (2021), no.~6, 2678--2687.

\bibitem{Vkumar} V. Kumar\ and\ S. Kumar, Bounds on Hermitian-Toeplitz and Hankel determinants for strongly starlike functions, Bol. Soc. Mat. Mex. (3) {\bf 27} (2021), no.~2, Paper No. 55, 16 pp.

\bibitem{Landau} E. Landau\ and\ D. Gaier, {\it Darstellung und Begr\"{u}ndung einiger neuerer Ergebnisse der Funktionentheorie}, third edition, Springer-Verlag, Berlin, 1986.

\bibitem{Lecko} A. Lecko, On coefficient inequalities in the Carath\'{e}odory class of functions, Annales Polonici Mathematici. Vol. 75. Instytut Matematyczny Polskiej Akademii Nauk, 2000.

\bibitem{Lecko2} A. Lecko, Y. J. Sim\ and\ B. \'{S}miarowska, The fourth-order Hermitian Toeplitz determinant for convex functions, Anal. Math. Phys. {\bf 10} (2020), no.~3, Paper No. 39, 11 pp.

\bibitem{Li} M. Li\ and\ T. Sugawa, A note on successive coefficients of convex functions, Comput. Methods Funct. Theory {\bf 17} (2017), no.~2, 179--193.

\bibitem{Libera} R. J. Libera\ and\ E. J. Z\l otkiewicz, Early coefficients of the inverse of a regular convex function, Proc. Amer. Math. Soc. {\bf 85} (1982), no.~2, 225--230.

\bibitem{Liu} M.-S. Liu, Y.-M. Shang\ and\ J.-F. Xu, Bohr-type inequalities of analytic functions, J. Inequal. Appl. {\bf 2018}, Paper No. 345, 13 pp.

\bibitem{Ma} W. Ma, Generalized Zalcman conjecture for starlike and typically real functions, J. Math. Anal. Appl. {\bf 234} (1999), no.~1, 328--339.

\bibitem{MaMinda} W. C. Ma\ and\ D. Minda, A unified treatment of some special classes of univalent functions, in {\it Proceedings of the Conference on Complex Analysis (Tianjin, 1992)}, 157--169, Conf. Proc. Lecture Notes Anal., I, Int. Press, Cambridge, MA.

\bibitem{MM} S. S. Miller\ and\ P. T. Mocanu, {\it Differential subordinations}, Monographs and Textbooks in Pure and Applied Mathematics, 225, Marcel Dekker, Inc., New York, 2000.

\bibitem{surveryonbohr} Y. A. Muhanna, R. M. Ali\ and\ S. Ponnusamy, On the Bohr inequality, in {\it Progress in approximation theory and applicable complex analysis}, 269--300, Springer Optim. Appl., 117, Springer, Cham.

\bibitem{Muhana} Y. A. Muhanna, Bohr's phenomenon in subordination and bounded harmonic classes, Complex Var. Elliptic Equ. {\bf 55} (2010), no.~11, 1071--1078.

\bibitem{tuneski} M. Obradovi\'{c}\ and\ N. Tuneski, Hermitian Toeplitz determinants for the class $\mathcal{S}$ of univalent functions, Armen. J. Math. {\bf 13} (2021), Paper No. 4, 10 pp.

\bibitem{Robertson} M. S. Robertson, Univalent functions starlike with respect to a boundary point, J. Math. Anal. Appl. {\bf 81} (1981), no.~2, 327--345.

\bibitem{Rogo} W. Rogosinski, \"{U}over image barriers in power series and their sections, Math. Z. {\bf 17} (1923), no.~1, 260--276.

\bibitem{DWT} S. Reich and D. Shoikhet, The Denjoy–Wolff theorem, Math. Encyclopaedia, Supplement 3, Kluwer Academic Publishers, (2002), 121–123.

\bibitem{Shoi} D. Shoikhet, {\it Semigroups in geometrical function theory}, Kluwer Academic Publishers, Dordrecht, 2001.

\bibitem{Thomas} D. K. Thomas, N. Tuneski\ and\ A. Vasudevarao, {\it Univalent functions}, De Gruyter Studies in Mathematics, 69, De Gruyter, Berlin, 2018.

\bibitem{Wu} L. Wu, Q. Wang\ and\ B. Long, Some Bohr-type inequalities with one parameter for bounded analytic functions, Rev. R. Acad. Cienc. Exactas F\'{\i}s. Nat. Ser. A Mat. RACSAM {\bf 116} (2022), no.~2, Paper No. 61, 13 pp.
\end{thebibliography}
\end{document}